\newtheorem{theorem}{Theorem}[section]
\newtheorem{lemma}[theorem]{Lemma}
\theoremstyle{definition}
\theoremstyle{remark}
\newtheorem{remark}[theorem]{Remark}
\numberwithin{equation}{section}
\begin{document}
	\title[]{Mathematical modeling of trend cycle: Fad, Fashion and Classic}

	\author[Bae]{Hyeong-Ohk Bae}
	\address[Bae]{Department of Financial Engineering, Ajou University, Suwon, Republic of Korea}
	\email{hobae@ajou.ac.kr}
	\author[Cho]{Seung Yeon Cho}
	\address[Cho]{Department of Mathematics and Research Institute of Natural Science, 
		Gyeongsang National University, Jinju, Republic of Korea}
	\email{chosy89@gnu.ac.kr}
	\author[Yoo]{Jane Yoo}
	\address[Yoo]{Department of Financial Engineering, Ajou University, Suwon, Republic of Korea}
	\email{janeyoo@ajou.ac.kr}
	\author[Yun]{Seok-Bae Yun}
	\address{Department of mathematics, Sungkyunkwan University, Suwon 440-746, Republic of Korea }
	\email{sbyun01@skku.edu}
	
	\keywords{trend cycle, Fad, Fashion, Classic, mathematical modeling}
	\begin{abstract}
	In this work, we suggest a system of differential equations that quantitatively models the formulation and evolution of a trend cycle through the consideration of underlying dynamics between the trend participants. Our model captures the five stages of a trend cycle, namely, the onset, rise, peak, decline, and obsolescence. It also provides a unified mathematical criterion/condition to characterize the fad, fashion and classic. We prove that the solution of our model can capture various trend cycles. Numerical simulations are provided to show the expressive power of our model.   
	\end{abstract}
	\maketitle

\section{Introduction}


People have a tendency to follow or mimic others in a human society, which can be described as a fashion or trend. 
A trend arises in many markets: in a financial market (Tulip mania \cite{V}, Bitcoin frenzy \cite{L}, etc.), a clothing industry (leggings in fast fashion brands \cite{CS}), 
a food industry (Organic food product trend \cite{R}), an entertainment industry (Hallyu \cite{K}), and in an electronic device market (Smartphones becoming fashion \footnote{https://sites.bu.edu/cmcs/2021/10/18/smartphones-fashion-galaxy-iphone/}). 
Such trends, upto some inevitable oversimplification, go through a cycle consisting of onset, rise, peak, decline, and obsolescence. 
A trend emerges 
as a small number of early adopters take the trend (onset). More people start to 
join the trend (rise). After the number of adopters reaches its peak (peak), some begin to leave the trend losing their interest in it (decline). Eventually, the trend 
loses its influencing power and is finally forgotten (obsolescence). These stages constitute a trend cycle. (See Figure \ref{fig classification11} from \cite{kaiser})

\begin{figure}[htbp]
	\centering
	\includegraphics[width=0.6\linewidth]{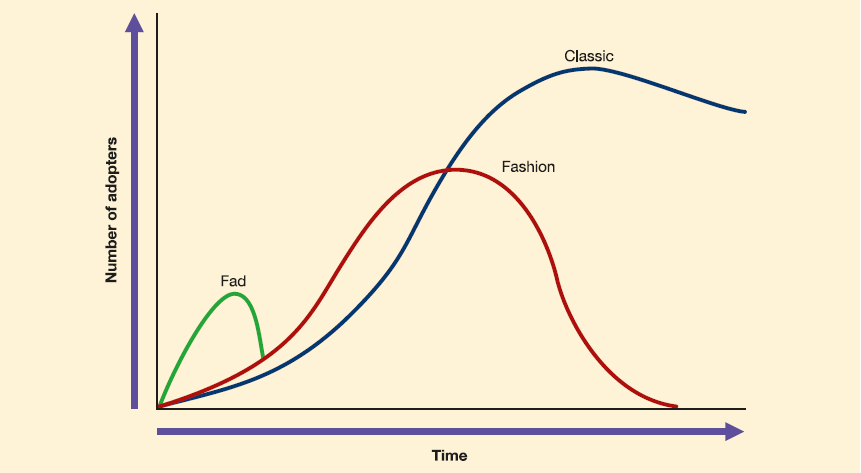}\centering
	\caption{Classification of a trend: Fad, Fashion, and Classic (Source: Susan Kaiser, The Social Psychology of Clothing\cite{kaiser})} \label{fig classification11}
\end{figure}

Such trend cycles are often categorized into Fad, Fashion and Classic. Fad usually refers to a short-lived trend: 
it gains large popularity over a short period of time, and disappears quickly. 
A trend that lasts longer and 
is taken by a larger population is called Fashion. Classic is a style that has firmly settled in a  society or the market. It persists over a much more extended time period. Usually, it enjoys more royalty from customers and is less volatile. But the precise definitions of Fad, Fashion and Classic are not available in the literature (e.g.\cite{B-M, Suk, Y}). Besides the facts that Fad lasts shorter than Fashion, and Classic lasts longer than Fashion, there exists no clear feature that plays as a dividing pole between these three different cycles. For example, it may depend on the business. The usual lifespan of Fad (e.g. Squid Game, Viral video clips on YouTube or TikTok, and Psy's popular song Gangnam Style) in an entertainment business can be much shorter than that of Fad in the fashion industry or electric device industry (e.g. Blackberry and portable media player). Moreover, Fashion may arise as a renovated form of an existing fashion style (e.g. various styles of jeans). Some products of brands such as Coca-Cola, Louis Vuitton, and Chanel are considered as examples of classical items. 

The purpose of this work is twofold. First, we provide a dynamic modeling of the trend cycle. 
We introduce a system of differential equations that, through the consideration of underlying dynamics between the market participants, explains the formation of the trend cycle. We adopt a simple picture that a society is divided into three groups: people who have not joined a particular trend yet, people who have adopted the trend, and people who have left the trend. We then recognize the analogy between the infection/recovery processes in an epidemic spread, and the adoption/rejection processes in a trend cycle to derive dynamic laws between the three groups.

Secondly, based on this model, we provide a mathematically rigorous criterion to characterize Fad, Fashion, and Classic. 
As mentioned above, a trend cycle is dubbed fad, fashion, and classic based on the volatility of the trend. But a clear guideline to characterize them does not exist. In this regard, we observe that the fad, fashion, and classic can be divided by the way how participants leave the trend. As we can see in Figure \ref{fig classification11}, this is reflected in the steepness of the slope in the trend cycle at the stages of decline and obsolescence. This, in turn, can be realized in our differential model by a suitable choice of rejection rates and parameters (see Sec. 2). Our description of a trend cycle provides a rigorous way to characterize these three different types of trends in a precise way. We define a fad as a cycle that extincts in a finite time, leading to hard landing of the curve in a finite time. Classic can be defined by the curve declining at most polynomial order in the long run. Fashion is defined as a trend cycle that lies between them. The seasonal revival of a fashion trend can also be captured. Our trend cycle model provides a unified framework to understand various different types of trends: a fashion trend, a literature style, a commodity's popularity, a cultural frenzy in music and movie, a speculative behavior in a cryptocurrency market.

Studies on a trend cycle involve numerous topics such as marketing, legal issues, consumer behavior, technology, media, historical analysis, cultural study, etc. An exhaustive review on such a broad topic is not plausible. We focus on the literature that is directly relevant to ours. In \cite{P}, a trend cycle model is developed to study how a new design is created, and its popularity eventually falls over time as it spreads across the population. The advent of various social media significantly impacts the fashion trend these days. In \cite{M}, the author examines the relationship between social media and fashion. \cite{KK} investigates the impacts of social media marketing on customers' intimacy and trust in the luxury brand. On the other hand, a recent study in \cite{CL} shows that consumers' emotion becomes a key factor in determining the fashion trend. In a seminal work \cite{Suk}, Hemphill and Suk described the evolution of a fashion trend by combining the effect of flocking and differentiation and investigated various legal issues in the fashion industry based on such an observation. The study on the spread of a fashion trend between social classes has been studied which can be categorized roughly into the following three types: trickle-down theory (spread of a fashion from upper to lower class), trickle-up theory (spread of a fashion from lower to upper class), and trickle-across theory (horizontal movement of fashion) \cite{Sim,Suk,Mac}. See \cite{EM, H, kaiser, King,Suk} and references therein for an overview of various topics in fashion.



As mentioned above, our model has analogy with the epidemic spread model (e.g. susceptible-infected-recovered (SIR) type models). Therefore, a brief review of SIR-type models is in order. Since the inception of the model in \cite{KM}, the SIR model has been widely used for quantitative modeling of epidemics \cite{B-W-A-C,H-L-M,K-S1,K-S2}. The SIR model is also successfully employed to study the recent Covid-19 pandemic to understand the spread mechanism and forecast the possible progress of the pandemic \cite{G-L,L-P-G,A-B-D,N-D-V-H}.

The paper is organized as follows: in Sec. 2, we derive a system of differential equations that describes the dynamics of a trend cycle.  In Sec. 3, we verify that the solution of our model satisfies the desired decaying properties. In Sec. 4, we carry out various numerical experiments to justify our model. In Sec 5, we draw conclusions of the paper.

\section{Dynamic modelling of trend cycle}
We derive a system of differential equations to describe a trend cycle. For this, we introduce three variables that correspond to the number of potential adopters of a trend,  the number of people who adopted the trend, and the number of people who left the trend. We then derive a dynamic law between these three groups by observing how they adopt a trend and how they reject it.


First, we  define the three dynamic variables:
\begin{itemize}
\item $S(t)$: the number of potential adopters of a trend at time $t$,
\item $I(t)$: the number of people who have adopted the trend at time $t$,
\item $R(t)$: the number of people who have left the trends at time $t$. 
\end{itemize}
Note that, by definition, any reasonable model must satisfy 
\begin{align}\label{conservation}
N=S(t)+I(t)+R(t)\quad t\geq 0,
\end{align}
where $N$ is the number of individuals in a society. We now consider how these compartments in the trend interact through the adoption and rejection.\\

\noindent $\bullet$ {\bf Equation for the evolution of $I$: }We start with the derivation of dynamic equation for $I$. The equations for other variables follow almost automatically once the equation for $I$ is determined. For this, we consider two factors below: the adoption, and rejection of a trend.\\

\noindent {\bf Trend Adoption:}
Our main assumption is that the adoption rate of a new trend depends on how often people in a society are exposed to the trend. 
Here, we assume such exposure is expressed in the following multiplicative law:
\begin{align}\label{sum}
\alpha(t) \frac{I(t)}{N} S(t)
\end{align}
for a suitable adoption rate $\alpha$ at each time $t$. 
It remains to determine $\alpha(t)$. For this, we make the following reasonable assumption on the adoption rate: 1) as the ratio of people adopting the trend increases, people get more interested in the trend (that is, $\alpha$ is a monotonic function of $I/N$); 2) As the number of people adopting the trend starts to decrease, people get less interested in the trend.
A suitable adoption rate satisfying these assumptions is a sigmoid function: 
\begin{align}\label{a mono}
\begin{split}
\alpha(t):=\alpha\left(\frac{I(t)}{N}\right)= \frac{m_1}{1+\exp\left(-m_2 \left(\frac{I(t)}{N}-\ell_{\alpha}\right)\right)},
\end{split}
\end{align}
where $m_1>0$ and $m_2>0$ describe the intensity of the adoption and the sharpness of the transition, and $\ell_{\alpha}$ is the adoption delay.\\

\noindent {\bf Trend Rejection:}
To describe how people reject a trend, we employ the following simple expression, which says that the number of people leaving the trend is proportional to $I$:  
\[
\beta(t) I(t).
\]
Our main assumption in choosing the appropriate $\beta(t)$ is that people barely leave the trend when the trend is rising, but start to leave the trend after the trend hits the peak.
It is how quickly people lose their interest after the number of trend-followers gets saturated. 
For this, we let $t=t_*$ to be the first time on which $I$ takes the maximum value, which will be called the transition:
\[
t_*:=\inf\left\{t>0 \hspace{0.2mm}\vert \hspace{0.2mm}\, I'(t)=0.\right\}
\]
Then, we define the rejection rate  as follows: $(p\in\mathbb{R})$

\begin{align}\label{b mono}
\beta(t):=\beta\left(\frac{I(t)}{N}\right)=\left\{ 
\begin{array}{ll}
\frac{m_3}{1+\exp\left(-m_4 \left(\frac{I(t)}{N}-\ell_{\beta}\right)\right)}& ~(t\leq t_*),\\
 C_*\left(\frac{I(t)}{N}\right)^p&~(t>t_*),
\end{array}\right.
\end{align}
where $C_*$ denotes $$C_*=\beta(t_*)\left(\frac{I(t_*)}{N}\right)^{-p}.$$
We note that $m_3, m_4>0$ describe the intensity of rejection and the sharpness of the transition, and $\ell_{\beta}\in \mathbb{R}$ is the delay in rejection.
Note also that there are cases when $I$ decreases from $t=0$. We can set $t_*$ to be infinity in this case. 

The intuition behind this choice of $\beta$ is as follows: Since $I$ decreases after $t=t_*$, the case $p\geq 0$ corresponds to the case when the rate of rejection is slowing down leading to the tail
that lasts forever. On the other hand, $p<0$
corresponds to the case when the rate of rejection accelerates leading to the obsolescence of the trend in finite time, say $T_*>t_*$. 
We will consider this issue further below.


Finally, the rate of change of $I$ is determined by the discrepancy between the adoption and rejection rates:
\begin{align}\label{mono model I}
\begin{split}
I^{\prime}&=\frac{\alpha(I/N)}{N} I S - \beta(I/N) I.
\end{split}
\end{align}

If there is no confusion, we omit $t$ in the rest of this paper for short. 

\noindent $\bullet$ {\bf Equations for the evolution $S$ and $R$: }
We now turn to the equation of $R$. The number of people who found a trend is not interesting anymore is given by the number of people who leave the trend:
 
\begin{align*}
\begin{split}
\beta(I/N) I.
\end{split}
\end{align*}
But some of people who left the fashion become potential consumers again with the ratio $\delta$:
\[
\delta R.
\]
Therefore, the equation for $R$ is presented by 

\begin{align*}
\begin{split}
R^{\prime}= \beta(I/N) I-\delta R.
\end{split}
\end{align*}
Similarly, we have

\begin{align*}
\begin{split}
S^{\prime}=-\frac{\alpha(I/N)}{N}  I S+\delta R.
\end{split}
\end{align*}
\noindent$\bullet$ {\bf System for the trend cycle model:} 
We then combine these three equations to obtain the following dynamic model for a trend:
\begin{align}\label{mono fashion model}
\begin{split}
S^{\prime}&=-\frac{\alpha(I/N)}{N}  I S+ \delta R,\cr
I^{\prime}&=\frac{\alpha(I/N)}{N} I S - \beta(I/N) I,\cr
R^{\prime}&= \beta(I/N) I - \delta R,
\end{split}
\end{align}
where the adoption rate $\alpha$ and the rejection rate $\beta$ are defined in \eqref{a mono} and \eqref{b mono}.\\

\noindent$\bullet$ {\bf Rescaled trend model:} For simplicity, we rewrite 
\[
\frac{S}{N}\rightarrow S,\quad \frac{I}{N}\rightarrow I,\quad \frac{R}{N}\rightarrow R, 
\] 
so that \eqref{mono fashion model} is transformed into the following rescaled form:
\begin{align}\label{main model scale}
\begin{split}
S'&=-\alpha(I) I S+ \delta R,\cr
I'&=\alpha(I) I S - \beta(I) I,\cr
R'&= \beta(I)  I - \delta R.\cr
\end{split}
\end{align}
Note that after the transition time $t_*$, the system becomes
\begin{align}\label{main model scale 2}
	\begin{split}
		S'&=-\alpha(I) I S+ \delta R,\cr
		I'&=\alpha(I) I S - C_*I^{p+1},\cr
		R'&= C_*I^{p+1} - \delta R.\cr
	\end{split}
\end{align}
Throughout this paper, we work on this rescaled system.\\

\noindent$\bullet$ {\bf Characterization of trend cycles - Fad, Fashion and Classic:} 
Now, we explain how our model can characterize three different types of trends, namely, the fad, the fashion, and the classic, by proper choices of $p$ in the rejection rate $\beta$. 
We attempt to provide  $\beta$ with clear mathematical reasoning.
Since we are interested in the behavior after the trend reaches its peak, we consider
only  $t>T_*$, for which the equation for $I$ is (See \eqref{main model scale 2})
$$
I'=\alpha SI-C_* I^{p+1}.
$$
 In the case $p=0$, the equation for $I$ becomes
\begin{align*}
	I'=\alpha SI-C_* I.
\end{align*}
Since $\alpha$ has a lower and upper bound, we can expect that $I$ will behave like
the usual SIR model, for which $I$ decays exponentially.
When $p>0$, $I^{p+1}$ is much smaller than $I$ (since $I$ is normalized: $0\leq I< 1$).
Therefore, we can expect that the decay rate of $I$ will be much slower than that of $p=0$.
Finally, in the case $p<0$, $I^{p+1}$ gets bigger as $I$ decreases to $0$. Moreover, since
$SI$ is bounded by $I$, we expect that $C_* I^{p+1}$ will dominate over $\alpha SI$ so that 
the equation for $I$ in this case is governed by  
$$I'\approx- C_*I^{p+1},$$
which vanishes in a finite time  (see Fig. \ref{fig I_c}).
This intuition leads to the following classification of the phase of $I$: 
\begin{itemize}
\item $I$ vanishes in a finite time with a hard landing if $p\le-1$,
\item $I$ vanishes in a finite time with a soft landing if $-1<p<0$,
\item $I$ decreases exponentially without vanishing in a finite time if $p=0$,
\item $I$ decreases in a polynomial order if $p>0$,
\end{itemize}
which will be verified analytically in Sec. 4, and numerically in Sec. 5. 
This provides us a way to draw a line between the fad, the fashion and the classic in a mathematically rigorous manner:
\begin{itemize}
\item Fad: The cycle experiences a finite time extinction with a hard landing if $p\le -1$.
\item Fast fashion: The cycle experiences a finite time extinction with a soft landing if $-1<p<0$. 
\item Fashion: The cycle declines exponentially fast if $p=0$.
\item Classic: The cycle declines in a polynomial order if $p>0$.
\end{itemize}

This characterization matches well with our understanding that fad, fashion and classic are basically determined by the manner
in which people reject a trend. This, in turn, is reflected in the slope's steepness at the stage of decline and obsolescence, and the duration of the trend cycle. 
We also define th eperiodic case by
\begin{itemize}
\item Periodic: $p\geq 0$ with $\delta>0$.
\end{itemize}
Note that we only consider the case $p\geq0$ for periodic case to prevent the situation where the trend cycle extincts before it enters the next cycle. 
\begin{remark}
 We notice here the equation of $I$ consists of power-law.
 We may find a similarity between the behavior of the trend cycle and the motion of the non-Newtonian power-law fluid.
 It is shown in \cite[Theorem 5.1]{BHO} that
 \begin{itemize}
 \item the energy of the fluid vanishes in a finite time  for $p<0$ (shear thinning fluid),
which corresponds to Fad or Fast fashion,
\item it decays exponentially for $p=0$ (Newtonian fluid, Navier-Stokes equations), which corresponds to Fashion, and
\item it decays polynomially for $p>0$ (shear thickening fluid), which corresponds to Classic.
\end{itemize}
\end{remark}

\begin{figure}[!htb]
	\begin{minipage}{0.48\textwidth}
		\includegraphics[width=1\linewidth]{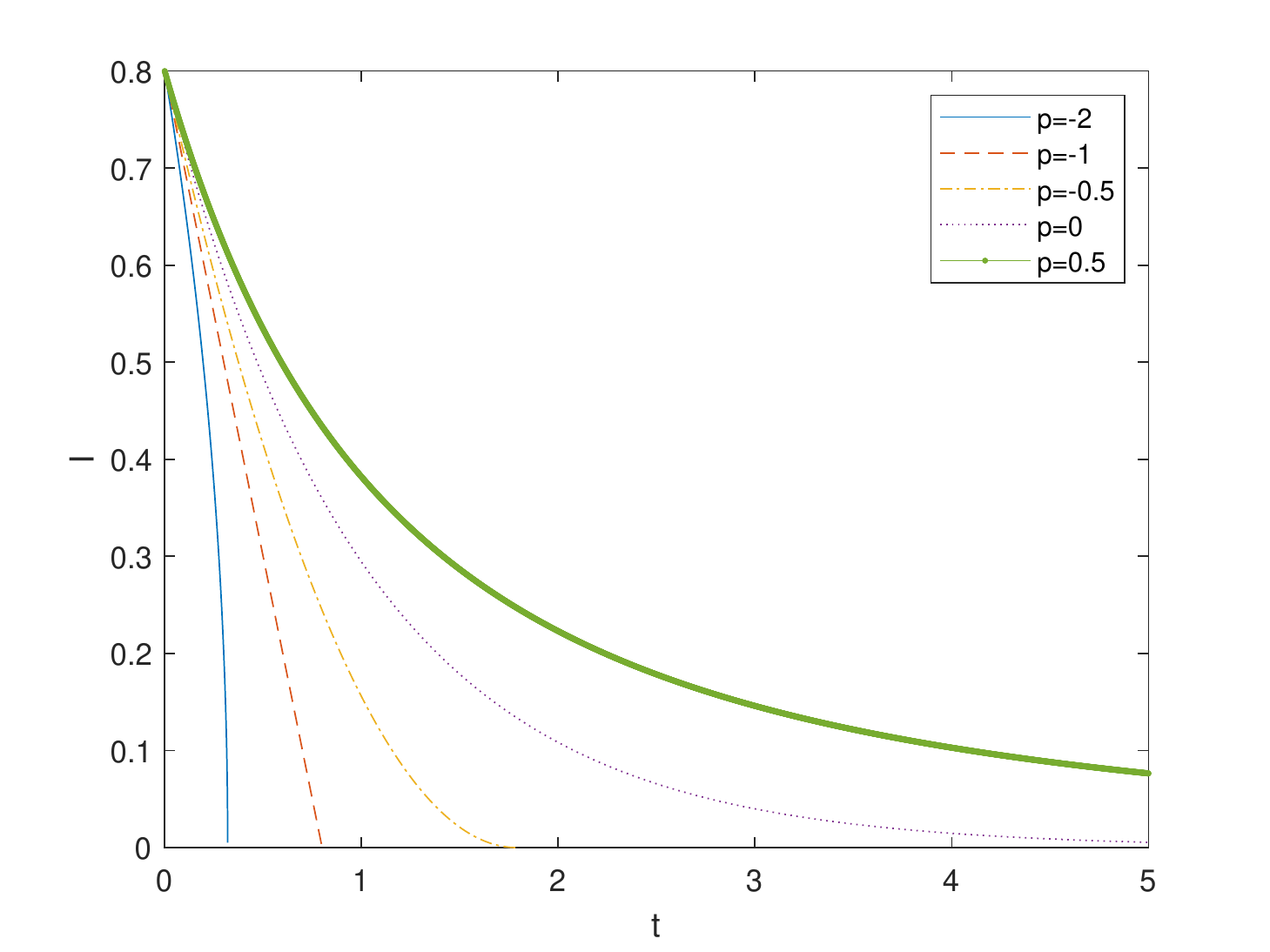}
		\caption{$p$-dependency of $I$. Here we solve the problem $I'(t)=-I^{p+1}$ with $I(0)=0.8$.}\label{fig I_c}
	\end{minipage}\hfill
	\begin{minipage}{0.48\textwidth}
		\includegraphics[width=1\linewidth]{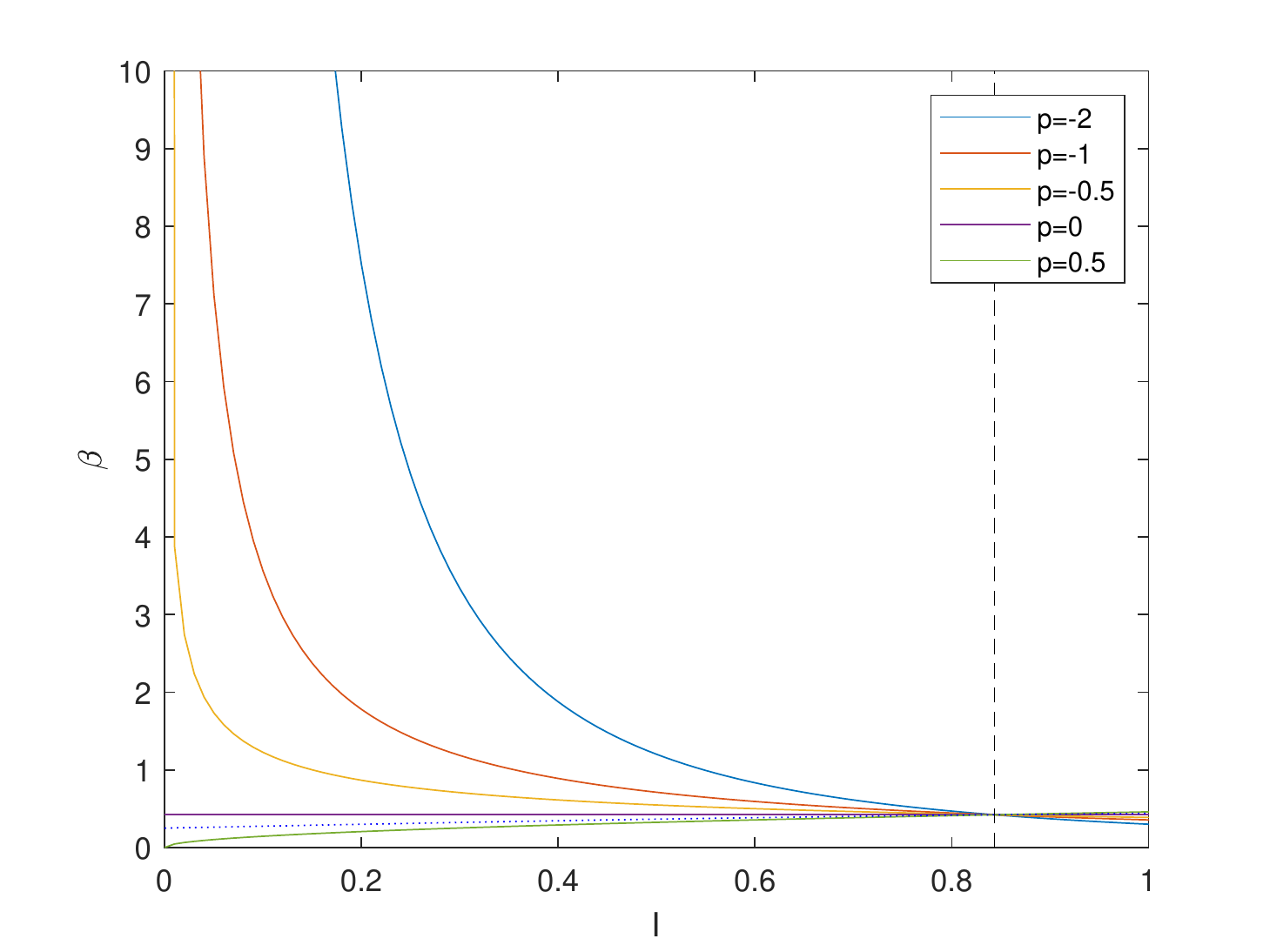}
		\caption{Shape of function $\beta$ after $t=t_*$. Here $I\approx0.84$ is the value of $I(t_*)$.}\label{fig beta}
	\end{minipage}
\end{figure}

\section{Analysis of Classic, Fashion and Fad}
In Section 2, we characterized the three different trends, Fad, Fashion and Classic, based on how the such a trend extinct as time goes.In this section, we verify them by deriving various decay estimates of $I$ depending on $p$. Before we state our main theorem, we need several technical lemmas. Throughout this section,  we only consider the non-periodic case: $\delta=0$. We also  recall that $m_1$ and $m_3$ denote the intensity of adopter/rejection in \eqref{a mono} and \eqref{b mono} respectively. For clarity of the proof, we fix $\ell_{\alpha}=\ell_{\beta}=0$ throughout this section. 
\begin{lemma}\label{conservation lemma}
Let
$$
S(0)+I(0)+R(0)=1.
$$ 	
Then we have
$$
S(t)+I(t)+R(t)=1.
$$
for $t\geq0$. 
\end{lemma}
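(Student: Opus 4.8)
The plan is to show that the total mass $M(t) := S(t) + I(t) + R(t)$ is a conserved quantity of the dynamics, so that it stays pinned to its initial value $1$. First I would differentiate $M$ and substitute the right-hand sides of the rescaled system \eqref{main model scale}:
\begin{align*}
M'(t) &= S'(t) + I'(t) + R'(t) \\
&= \bigl(-\alpha(I) I S + \delta R\bigr) + \bigl(\alpha(I) I S - \beta(I) I\bigr) + \bigl(\beta(I) I - \delta R\bigr).
\end{align*}
The two adoption terms $\pm\,\alpha(I) I S$ cancel, the two rejection terms $\pm\,\beta(I) I$ cancel, and the two reversion terms $\pm\,\delta R$ cancel, so that $M'(t) \equiv 0$. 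Integrating from $0$ then gives $M(t) = M(0) = S(0) + I(0) + R(0) = 1$, which is exactly the differential form of the bookkeeping identity \eqref{conservation}. Note that this cancellation holds for any $\delta$, and in particular for the case $\delta = 0$ considered throughout this section.

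The one point requiring a little care is that the system is defined piecewise in time: the rejection rate $\beta$ switches form at the transition time $t_*$, so after $t_*$ the governing equations are \eqref{main model scale 2} rather than \eqref{main model scale}. The cancellation above is, however, insensitive to the precise form of $\beta$. On the interval $t > t_*$ the quantity $\beta(I) I$ is simply replaced by $C_* I^{p+1}$, which still enters the $I$-equation with a minus sign and the $R$-equation with a plus sign, so the identical pairwise cancellation yields $M'(t) = 0$ on $(t_*, \infty)$ as well. Since $\beta$ is continuous at $t_*$ by the defining choice $C_* = \beta(t_*)\bigl(I(t_*)/N\bigr)^{-p}$, the solution $(S, I, R)$ is continuous there; a function that is constant on each of the two subintervals and matches at the junction $t_*$ is therefore constant on all of $[0,\infty)$.

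I do not anticipate any genuine obstacle: once the derivative of $M$ is written out, the result is a one-line cancellation reflecting that the loss term in each equation is exactly the gain term in another. The only aspect worth recording explicitly is the continuity argument across $t_*$, which is what licenses gluing the (trivially established) constancy on the two regimes into a single global conservation law.
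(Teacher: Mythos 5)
Your proof is correct and follows the same route as the paper: summing the three equations of \eqref{main model scale} gives pairwise cancellation of the adoption, rejection, and reversion terms, so $\frac{d}{dt}(S+I+R)=0$ and the sum stays at its initial value $1$. The extra remark about the piecewise form of $\beta$ across $t_*$ is a reasonable bit of added care but does not change the argument, which the paper presents in the same one-line form.
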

\begin{proof} 	
We sum up the equations in the system \eqref{main model scale} and observe that the right hand side of \eqref{main model scale} cancels out each other. Then, we get	
$$ \frac{d}{dt}\left(S+ I + R\right) = 0,$$
which implies the desired result.
\end{proof}


In the following lemma, we consider the positivity of the solutions $S(t),~I(t),~R(t)$. 
\begin{lemma}\label{Lem positivity}\emph{\bf[Positivity of solutions]}
 Assume that
	\begin{align}\label{Lem init cond}
S(0) > 0,\quad  I(0)> 0,\quad R(0)= 0.
\end{align}
	Then, we have
	\begin{align*}
			S(t)>0,\quad  I(t)> 0,\quad R(t)> 0
		\end{align*}
	for  $0<t<T_*$.
\end{lemma}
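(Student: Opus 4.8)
The plan is to exploit the quasi-linear structure of the system \eqref{main model scale} with $\delta=0$, writing the first two equations as scalar linear ODEs for $S$ and $I$ (with the remaining variables frozen as time-dependent coefficients) and reading off positivity from the resulting integrating-factor representations; the positivity of $R$ will then follow from a monotonicity argument. Throughout I work on the maximal interval of existence, on which the right-hand side is smooth as long as $I>0$.

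First I would treat $S$. Since $\delta=0$, the first equation reads $S'=-\alpha(I)\,I\,S$, which is linear in $S$, so
\[
S(t)=S(0)\exp\left(-\int_0^t \alpha(I(s))\,I(s)\,ds\right).
\]
Because $\alpha$ is the bounded sigmoid \eqref{a mono} and $I$ is continuous, the integrand is bounded on every compact subinterval of $[0,T_*)$; hence the integral is finite, the exponential is strictly positive, and $S(0)>0$ yields $S(t)>0$. The equation for $I$, namely $I'=\bigl(\alpha(I)S-\beta(I)\bigr)I$, is handled identically:
\[
I(t)=I(0)\exp\left(\int_0^t \bigl(\alpha(I(s))S(s)-\beta(I(s))\bigr)\,ds\right),
\]
so that $I(t)>0$ as long as $I(0)>0$ and the exponent is finite.

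Next, for $R$ I would use that $R'=\beta(I)\,I$ when $\delta=0$. The rate $\beta(I)$ is strictly positive for $I>0$: on the sigmoid branch of \eqref{b mono} this holds since $m_3>0$, and on the power branch $C_*I^p$ since $C_*>0$ and $I>0$. Thus the right-hand side is nonnegative, so $R$ is nondecreasing from $R(0)=0$; moreover $R'(0)=\beta(I(0))\,I(0)>0$, so $R$ leaves the origin immediately and stays there, giving $R(t)>0$ for all $0<t<T_*$.

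The delicate point, and the one I expect to be the main obstacle, is the integrability of $\beta(I)$ in the exponent for $I$ when $p<0$: there $\beta(I)=C_*I^{p}$ blows up as $I\to 0$, so the representation for $I(t)$ is only legitimate once $I$ is known to stay bounded away from $0$. I would close this gap by a continuity (bootstrap) argument: let $\tau=\inf\{t>0:\,I(t)=0\}$ be the first extinction time, which coincides with $T_*$. On any compact $[0,t]\subset[0,\tau)$ the continuous function $I$ attains a positive minimum, so $\beta(I)$ is bounded there (across both branches of \eqref{b mono}) and the integrating-factor formulas above are rigorously valid, propagating $S,I>0$ up to $\tau=T_*$. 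This confines all the singular behavior of $\beta$ to the endpoint $t=T_*$ itself, precisely where $I$ is permitted to vanish, and leaves positivity intact on the open interval $(0,T_*)$.
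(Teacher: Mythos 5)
Your proposal is correct, and the core mechanism is the same as the paper's: exploit the multiplicative structure $X'=(\text{coefficient})\cdot X$ so that positivity of $S$ and $I$ is read off from an exponential representation, and get $R>0$ from $R'=\beta(I)I\ge 0$ with $R(0)=0$. The difference is in how the coefficient for $I$ is controlled. The paper does not use the exact integrating-factor formula for $I$; instead it splits into three regimes and bounds the rejection term from below by a Gr\"onwall-type differential inequality in each: $I'\ge -m_3 I$ for $t<t_*$, $I'\ge -C_*I$ for $t>t_*$ with $p\ge 0$, and $I'\ge -C_*I^{p+1}$ (a Bernoulli inequality) for $t>t_*$ with $p<0$, yielding explicit positive lower bounds such as $I(t)\ge e^{-m_3t}I(0)$ and $I(t)\ge \left(I^{-p}(t_*)+pm_3(t-t_*)\right)^{-1/p}$. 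This buys something your continuity bootstrap does not: in the $p<0$ case your argument reduces to ``$I>0$ before its first zero, and the first zero is $T_*$,'' which is essentially the definition of $T_*$ rather than an independent verification; the paper's Bernoulli bound instead produces a quantitative lower bound on $I$ (hence on $T_*-t_*$), which is the substantive content and is reused in the later decay analysis. Your argument does suffice for the lemma exactly as stated, and your treatment of $R$ (strict monotonicity from $R'=\beta(I)I>0$ whenever $I>0$) is in fact cleaner than the paper's one-line remark, which contains a typo ($R=\beta SI$ in place of $R'=\beta I$). If you want your version to match the paper's in strength, replace the bootstrap in the $p<0$ regime by the explicit inequality $I'\ge -C_*I^{p+1}$ and integrate it.
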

\begin{proof}
	See Appendix.
	\end{proof}

In the following lemma, we provide a sufficient condition under which the transition time $t_*$ becomes finite.

\begin{lemma}\label{Lem t_*}
	 Assume 
		\begin{align*}
	S(0) > 0,\quad  I(0)> 0,\quad R(0)= 0.
	\end{align*}
	Suppose further that  
	\begin{align*}
		\displaystyle m_1S(0)>2m_3.
	\end{align*}
	Then the transition time $t_*>0 $ is finite.
\end{lemma}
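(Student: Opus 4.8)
The plan is to argue by contradiction: if the peak of $I$ is never attained, then $S$ is forced to decay to zero, which is incompatible with $I$ staying strictly increasing. I work with the rescaled system \eqref{main model scale} in the regime $t\le t_*$, where $\delta=0$ and $\beta$ is the sigmoid branch of \eqref{b mono}. Two elementary uniform bounds drive everything: from \eqref{a mono} and \eqref{b mono} with $\ell_\alpha=\ell_\beta=0$, for every $I\ge 0$ one has $\tfrac{m_1}{2}\le \alpha(I)<m_1$ and $\tfrac{m_3}{2}\le\beta(I)<m_3$, since each sigmoid is increasing and equals half of its supremum at $I=0$. Because $I(t)>0$ (Lemma \ref{Lem positivity}), the condition $I'(t)=0$ is equivalent to $\alpha(I)S=\beta(I)$, which is the identity I will track.

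First I would check that $I$ genuinely rises at the start. Since $I(0)>0$,
\[
I'(0)=I(0)\big(\alpha(I(0))S(0)-\beta(I(0))\big)\ge I(0)\Big(\tfrac{m_1}{2}S(0)-m_3\Big)>0,
\]
where the last inequality is exactly the hypothesis $m_1S(0)>2m_3$. By continuity of $I'$, we have $I'>0$ on a neighborhood of $0$, so in particular $t_*>0$. Now suppose for contradiction that $t_*=\infty$. By the definition of $t_*$ as the first zero of $I'$, together with $I'(0)>0$ and continuity of $I'$, the intermediate value theorem forces $I'(t)>0$ for all $t>0$; hence $I$ is strictly increasing and $I(t)\ge I(0)>0$ for all $t$. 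Integrating the $S$-equation $S'=-\alpha(I)IS$ gives
\[
S(t)=S(0)\exp\!\Big(-\int_0^t \alpha(I(s))\,I(s)\,ds\Big),
\]
and since the integrand is bounded below by $\tfrac{m_1}{2}I(0)>0$, the integral diverges and $S(t)\to 0$ as $t\to\infty$.

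Finally I would close the contradiction. On the other hand, $I'(t)>0$ is equivalent to $\alpha(I)S>\beta(I)$, so the uniform bounds give
\[
S(t)>\frac{\beta(I(t))}{\alpha(I(t))}>\frac{m_3/2}{m_1}=\frac{m_3}{2m_1}>0
\]
for all $t$. This contradicts $S(t)\to 0$, so $t_*$ must be finite. The argument is short because the competition between the exponential decay of $S$ (driven by the strictly positive integrand once $I$ is increasing) and the strictly positive floor that monotonicity of $I$ imposes on $S$ is immediate; the only delicate points are the bookkeeping of the uniform bounds on $\alpha$ and $\beta$, and the observation that $I'$ cannot change sign without passing through a zero, so that $t_*=\infty$ really does imply $I'>0$ throughout.
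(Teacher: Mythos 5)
Your proof is correct and follows essentially the same strategy as the paper's: show $I'(0)>0$ from $m_1S(0)>2m_3$, assume $I'>0$ forever, deduce $S(t)\to 0$ from $S(t)=S(0)\exp(-\int_0^t\alpha I\,ds)$ with integrand bounded below by $\tfrac{m_1}{2}I(0)$, and reach a contradiction. The only cosmetic difference is the final step — you extract the lower bound $S>\beta/\alpha\ge m_3/(2m_1)$ directly from $I'>0$, whereas the paper uses the smallness of $S$ to force $I'<0$ for large $t$ — but these are the same contradiction read in opposite directions.
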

\begin{lemma}\label{Prop small} \emph{\bf[Boundedness after $t_*$: $p<0$]} Let $p<0$. 	Assume \[
	S(0),~ I(0),~ R(0)>0
	\] 
	and
	\begin{align*}
		\displaystyle m_1S(0)>2m_3.
	\end{align*}
	Then, there exists $t_*\leq \tau< T_*$ such that   
	$$I\leq \left(\frac{C_*}{2m_1}\right)I^{p+1} ~\mbox{ for all }t>\tau.$$
\end{lemma}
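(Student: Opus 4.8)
The plan is to recast the desired inequality as a threshold condition on $I$ and then show that $I$, being monotone after the transition, falls below that threshold and stays there. Since $I>0$ after $t_*$, dividing by $I^{p+1}$ gives the chain of equivalences
\[
I\le \frac{C_*}{2m_1}I^{p+1}\ \Longleftrightarrow\ I^{-p}\le \frac{C_*}{2m_1}\ \Longleftrightarrow\ I\le A,\qquad A:=\left(\frac{C_*}{2m_1}\right)^{-1/p},
\]
where we used that $-p>0$ makes $x\mapsto x^{-p}$ increasing on $(0,\infty)$. Substituting $C_*=\beta(t_*)I(t_*)^{-p}$ yields the clean expression $A=\big(\beta(t_*)/(2m_1)\big)^{-1/p}\,I(t_*)$. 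Because $\beta(t_*)\le m_3$ and $S(0)\le1$ (the solutions are normalized, Lemma \ref{conservation lemma}), the hypothesis $m_1S(0)>2m_3$ forces $\beta(t_*)\le m_3<m_1/2<2m_1$; hence the prefactor $\big(\beta(t_*)/(2m_1)\big)^{-1/p}$ lies in $(0,1)$ and $A<I(t_*)$. Thus the inequality fails exactly at the peak, and the content of the lemma is that $I$ eventually drops below the level $A$.

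The second step is to show that $I$ is strictly decreasing on $(t_*,T_*)$. After $t_*$ (recall $\delta=0$ here) the $I$-equation reads $I'=I\,g$ with $g:=\alpha(I)S-C_*I^{p}$, and $g(t_*)=0$ by the definition of $t_*$. Positivity $S,I>0$ on the existence interval is immediate from the integrating-factor forms $S=S(0)\exp\!\big(-\!\int\alpha I\big)$ and $I=I(0)\exp\!\big(\int(\alpha S-C_*I^{p})\big)$. Differentiating $g$ and using $S'=-\alpha SI$, at any time where $g=0$ one has $I'=Ig=0$, so $g'=\alpha(I)S'=-\alpha(I)^2SI<0$. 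Thus $g$ can cross zero only downward; since $g(t_*)=0$ with $g'(t_*)<0$, we get $g<0$ immediately after $t_*$ and $g$ can never return to $0$ (that would require $g'\ge0$ at the return time). Hence $I'=Ig<0$ throughout $(t_*,T_*)$.

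Being strictly decreasing and bounded below, $I$ tends to some $L\ge 0$ as $t\to T_*^-$, and I would show $L<A$. Suppose $L\ge A>0$; then $I$ remains in the compact interval $[A,I(t_*)]$. If $T_*=\infty$, the bound $S'=-\alpha SI\le-\tfrac{m_1}{2}L\,S$ forces $S\to0$, whence $g\to-C_*L^{p}$ and $I'\to-C_*L^{p+1}\ne0$, which is impossible for a convergent $I$; if $T_*<\infty$, the vector field is smooth and bounded on $\{I\ge A\}$, so the solution would extend past $T_*$, contradicting the maximality of the extinction time. Therefore $L<A$, and since $I(t_*)>A$, continuity produces a first crossing time $\tau\in(t_*,T_*)$ with $I(\tau)=A$; strict monotonicity then gives $I(t)<A$, equivalently $I\le \big(C_*/(2m_1)\big)I^{p+1}$, for all $t>\tau$.

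I expect the main obstacle to be this last step: converting the monotone decay of $I$ into a genuine descent below the level $A$ strictly before $T_*$. Monotonicity alone does not preclude $I$ converging to some $L\in[A,I(t_*))$, so one must exploit the coupling with $S$, and one must handle the finite- and infinite-$T_*$ cases separately as above. The monotonicity step is the other delicate point, but the observation that $g'<0$ at every zero of $g$ reduces it to a short sign argument.
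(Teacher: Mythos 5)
Your proof is correct, but the mechanism you use for the key step differs from the paper's. Both arguments begin the same way: rewrite the target inequality as the threshold condition $I\le A$ with $A=(C_*/(2m_1))^{-1/p}$ and check $A< I(t_*)$ from $C_*=\beta(t_*)I(t_*)^{-p}$ and $\beta(t_*)\le m_3<m_1S(0)/2$. From there the paper does \emph{not} prove monotonicity of $I$ after $t_*$. It first proves a ``claim'' that $I$ eventually drops below \emph{any} level $I^*<I(t_*)$ — by the same device you use, namely that $I\ge I^*$ forces $S$ to decay exponentially, whence eventually $I'\le (m_1S-C_*)I\le-\tfrac{C_*}{2}I$ — and then shows the sublevel set $\{I\le A\}$ is forward-invariant by a first-return argument: if $\bar t$ were the first time $I$ returned to the level $A$, then on $(\tau,\bar t)$ one has $I^p>A^p=2m_1/C_*$, hence $I'\le m_1I-C_*I^pI<-m_1I<0$, contradicting the return. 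You instead establish the stronger fact that $I$ is strictly decreasing on all of $(t_*,T_*)$, via the observation that $g=\alpha(I)S-C_*I^p$ satisfies $g'=\alpha(I)S'<0$ at every zero of $g$ (the terms involving $I'$ drop out there since $I'=Ig=0$), and then exclude a limit $L\ge A$ by a two-case analysis on whether $T_*$ is finite. Both are sound. Your route yields the more informative monotonicity statement — which the paper essentially reproves separately, for $p=0$, inside Theorem \ref{main theorem1}(2) via $I''(t_*)<0$ and a contradiction at a putative later critical point — and makes the ``stays below'' half automatic; the paper's barrier argument is more elementary, requiring neither differentiating $g$ along the flow nor the finite-versus-infinite $T_*$ dichotomy.
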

\begin{proof}
	See Appendix.
\end{proof}	

We now study two Bernouli type inequalities
which will be crucially used in the analysis of
long time behavior of the trend cycles.

\begin{lemma}\label{lem bernoulli p>0} \emph{\bf [Bernoulli type differential inequality $(p>0)$]}
	Set $p>0$. Assume that $I(t)>0$ and the following differential inequality holds for $T_0<t<T_1$:  
	\begin{align*}
	\begin{split}
	&- bI^{p+1}	\leq I'\leq aI- bI^{p+1}.
	\end{split}
	\end{align*}
	Then, we have
	\begin{align*}
	\begin{split}
	(I(T_0))^{-p}e^{-pa(t-T_0)} +  \frac{b}{a}\left(1-e^{-pa(t-T_0)}\right)\leq (I(t))^{-p}\leq pb(t-T_0)+(I(T_0))^{-p}. 
	\end{split}
	\end{align*}
\end{lemma}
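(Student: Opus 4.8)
The plan is to linearize the Bernoulli-type inequality via the standard substitution $u:=I^{-p}$, which turns the nonlinear term $bI^{p+1}$ into the constant $b$ and the linear term $aI$ into a multiple of $u$. Since $p>0$ and $I>0$ on $(T_0,T_1)$, the function $u$ is well defined and differentiable wherever $I$ is, with
\[
u' = -p\,I^{-p-1}\,I'.
\]
The decisive point is that $I^{-p-1}>0$, so multiplying the hypothesis $-bI^{p+1}\le I'\le aI-bI^{p+1}$ through by the negative factor $-p\,I^{-p-1}$ reverses both inequalities. Using $I^{-p-1}\cdot I^{p+1}=1$ and $I^{-p-1}\cdot I=I^{-p}=u$, this produces the linear differential inequality
\[
-pa\,u+pb\ \le\ u'\ \le\ pb,\qquad T_0<t<T_1.
\]

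For the upper bound I would simply integrate the right-hand inequality $u'\le pb$ over $[T_0,t]$, giving $u(t)\le u(T_0)+pb(t-T_0)$, that is, $(I(t))^{-p}\le (I(T_0))^{-p}+pb(t-T_0)$, which is exactly the claimed right inequality.

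For the lower bound I would rewrite $-pa\,u+pb\le u'$ as $u'+pa\,u\ge pb$ and multiply by the integrating factor $e^{pat}$. Since $(e^{pat}u)'=e^{pat}(u'+pa\,u)\ge pb\,e^{pat}$, integrating from $T_0$ to $t$ and using $\int_{T_0}^t e^{pas}\,ds=\tfrac{1}{pa}\bigl(e^{pat}-e^{paT_0}\bigr)$ yields
\[
e^{pat}u(t)\ \ge\ e^{paT_0}u(T_0)+\frac{b}{a}\bigl(e^{pat}-e^{paT_0}\bigr).
\]
Multiplying by $e^{-pat}$ and substituting back $u=I^{-p}$ delivers the claimed left inequality
\[
(I(T_0))^{-p}e^{-pa(t-T_0)}+\frac{b}{a}\bigl(1-e^{-pa(t-T_0)}\bigr)\ \le\ (I(t))^{-p}.
\]

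There is no deep obstacle here; the proof is essentially careful bookkeeping. The one step demanding attention is the reversal of inequality directions when multiplying through by the negative quantity $-p\,I^{-p-1}$, after which the integrating factor must be applied to the one-sided inequality rather than to an equation. I would also note that $a>0$ is implicitly required for the lower bound, both so that division by $a$ is legitimate and so that $b/a$ appears as the correct equilibrium value; this matches the intended application, where $a$ bounds the (positive, bounded) adoption coefficient $\alpha S$.
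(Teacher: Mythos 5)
Your proof is correct and follows essentially the same route as the paper's: the Bernoulli substitution $u=I^{-p}$ (equivalently, dividing the inequality by $I^{p+1}$), direct integration of $u'\le pb$ for the upper bound, and Gr\"onwall's lemma via the integrating factor $e^{pat}$ for the lower bound. Your added remark that $a>0$ is implicitly needed for the lower bound is a fair observation not made explicit in the paper, but otherwise the arguments coincide.
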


%
%
%
%
%

\begin{theorem}\label{main theorem1}
	Let $\delta=0$.	Assume
	\[
	S(0),~ I(0),~ R(0)>0
	\] 
	and
	\begin{align*}
		\displaystyle m_1S(0)>2m_3.
	\end{align*}
	Then we have
	\begin{enumerate}
		\item {\bf Classic:}  $(p>0):$ We have for $t>t_*$
		\begin{align*}
			\begin{split}
				\frac{1}{\left(a_{*}^{-1/p}+b(t-t_*)\right)^{\frac{1}{p}}} \leq I(t) &\leq 
				\frac{1}{\left(a_*^{-1/p}-  \left(a_*-c\right)\left(1-e^{-pm_1(t-t_*)}\right)\right)^{\frac{1}{p}}}.
			\end{split}\cr
		\end{align*} 
		\item {\bf Fashion:} $(p=0)$. 	There exists  $t_2>t_*$ such that, for  $ t>t_{*}$
	\begin{align*}
		\begin{split}
			a_* e^{-C_*(t-t_*)}\leq I(t) \leq a_2e^{-\varepsilon(t-t_{2})}.\cr
		\end{split}
	\end{align*}
	\item {\bf Fad:} $p<0$, we have for  $\tau<t\leq T_*$
	\begin{align*}
		\frac{1}{\left(a_\tau^{-p} + b(t-\tau)\right)^{\frac{1}{p}}} \leq I(t)
		&\leq \frac{1}{\left(a_\tau^{-p} + \frac{b}{2}(t-\tau)\right)^{\frac{1}{p}}}.
	\end{align*}
	 
	\end{enumerate}
Here,	$a_*=I(t_*)$,  $a_2=\max\{a_*, I(t_2)\}$, $a_{\tau}=I(\tau)$,   $b=pC_*$,  $c=C_*/m_1$, 	$\varepsilon=\alpha(I(t_{2}))S(t_{2})- C_*$.
	The definition of $t_2$ is given in the proof.
	
\end{theorem}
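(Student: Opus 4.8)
The plan is to reduce all three cases to Bernoulli-type differential inequalities for $I$ on the post-peak interval $t>t_*$, where (with $\delta=0$) the equation reads $I'=\alpha(I)SI-C_*I^{p+1}$, and then to integrate them. Two structural bounds drive everything: the sigmoid adoption rate obeys $\tfrac{m_1}{2}\le\alpha(I)\le m_1$ for $0\le I\le 1$ (using $\ell_\alpha=0$), and $0\le S\le1$ by positivity (Lemma~\ref{Lem positivity}) together with conservation (Lemma~\ref{conservation lemma}). These give the crude envelope $0\le\alpha(I)SI\le m_1 I$ for the adoption term, which is all that is needed to sandwich $I'$.

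For the Classic case $(p>0)$, I would discard the adoption term below and bound it by $m_1 I$ above to get $-C_*I^{p+1}\le I'\le m_1 I-C_*I^{p+1}$ on $(t_*,\infty)$. This is precisely the hypothesis of Lemma~\ref{lem bernoulli p>0} with $a=m_1$, $b=C_*$, and $T_0=t_*$; applying it and then taking $(-1/p)$-th powers (a decreasing map since $p>0$) of the two-sided bound on $I^{-p}$ produces the stated polynomial envelope with $a_*=I(t_*)$, $b=pC_*$, $c=C_*/m_1$. The Fad case $(p<0)$ is analogous but invokes Lemma~\ref{Prop small}: for $t>\tau$ one has $\alpha(I)SI\le m_1 I\le\tfrac{C_*}{2}I^{p+1}$, hence $-C_*I^{p+1}\le I'\le-\tfrac{C_*}{2}I^{p+1}$. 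Substituting $u=I^{-p}$, so that $u'=-pI^{-p-1}I'$, converts this into $b\le u'\le\tfrac{b}{2}$ with $b=pC_*<0$; integrating from $\tau$ and returning to $I=u^{-1/p}$ gives the two-sided bound, whose upper envelope vanishes at a finite time, namely the extinction time $T_*$.

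The Fashion case $(p=0)$ separates into a routine lower bound and a more delicate upper bound. The lower bound is immediate from $\alpha(I)SI\ge0$, which yields $I'\ge-C_*I$ and thus $I(t)\ge a_*e^{-C_*(t-t_*)}$. For a genuine exponential upper rate I must show that $\alpha(I(t))S(t)$ eventually falls strictly below $C_*$. I would use three facts: that $I'(t_*)=0$ together with continuity of $\beta$ at $t_*$ forces $\alpha(I(t_*))S(t_*)=C_*$; that $S$ is nonincreasing since $S'=-\alpha(I)SI\le0$; and that $\alpha(I)$ decreases once $I$ decreases, by monotonicity of the sigmoid. To initiate the decrease I would evaluate $\tfrac{d}{dt}[\alpha(I)S]$ at $t_*$: since $I'(t_*)=0$, this equals $\alpha(I(t_*))S'(t_*)<0$, so $\alpha(I)S$ drops below $C_*$ immediately after $t_*$. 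I would then fix $t_2>t_*$ with $\alpha(I(t_2))S(t_2)<C_*$, set $\varepsilon=C_*-\alpha(I(t_2))S(t_2)>0$, and use monotonicity of $\alpha(I)S$ on $[t_2,\infty)$ to deduce $I'\le-\varepsilon I$, hence $I(t)\le a_2e^{-\varepsilon(t-t_2)}$ with $a_2=\max\{a_*,I(t_2)\}$.

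I expect the monotonicity step in the Fashion upper bound to be the crux. The bootstrap ``$I$ decreasing $\Rightarrow\alpha(I)S$ decreasing $\Rightarrow I'<0\Rightarrow I$ decreasing'' must be launched cleanly from the equality at $t_*$, and one must confirm that $\alpha(I)S$ stays below $C_*$ for \emph{all} $t>t_2$, not merely in a neighborhood of $t_*$, so that the exponent $\varepsilon$ is uniform. By contrast, the Classic and Fad cases are essentially mechanical once the envelope $0\le\alpha(I)SI\le m_1 I$ and Lemmas~\ref{lem bernoulli p>0} and~\ref{Prop small} are in hand.
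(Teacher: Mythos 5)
Your proposal follows essentially the same route as the paper: the envelope $0\le\alpha(I)SI\le m_1I$ feeding Lemma~\ref{lem bernoulli p>0} for $p>0$, Lemma~\ref{Prop small} plus direct integration of $-C_*I^{p+1}\le I'\le-\tfrac{C_*}{2}I^{p+1}$ for $p<0$, and Gr\"onwall plus the monotonicity bootstrap for $p=0$. The one step you flag as the crux but leave open --- that $\alpha(I)S$ stays below $C_*$ for all $t>t_*$ --- is closed in the paper by a first-crossing contradiction (assuming a first $t_1>t_*$ with $I'(t_1)=0$ and deriving $I'(t_1)<I'(t_*)=0$ from $I(t_1)<I(t_*)$, $S(t_1)<S(t_*)$ and monotonicity of $\alpha$), and your sign convention $\varepsilon=C_*-\alpha(I(t_2))S(t_2)>0$ is the one the decay estimate actually requires.
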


\begin{figure}[h]
	\centering
	\begin{subfigure}[h]{0.325\linewidth}
		\includegraphics[width=1\linewidth]{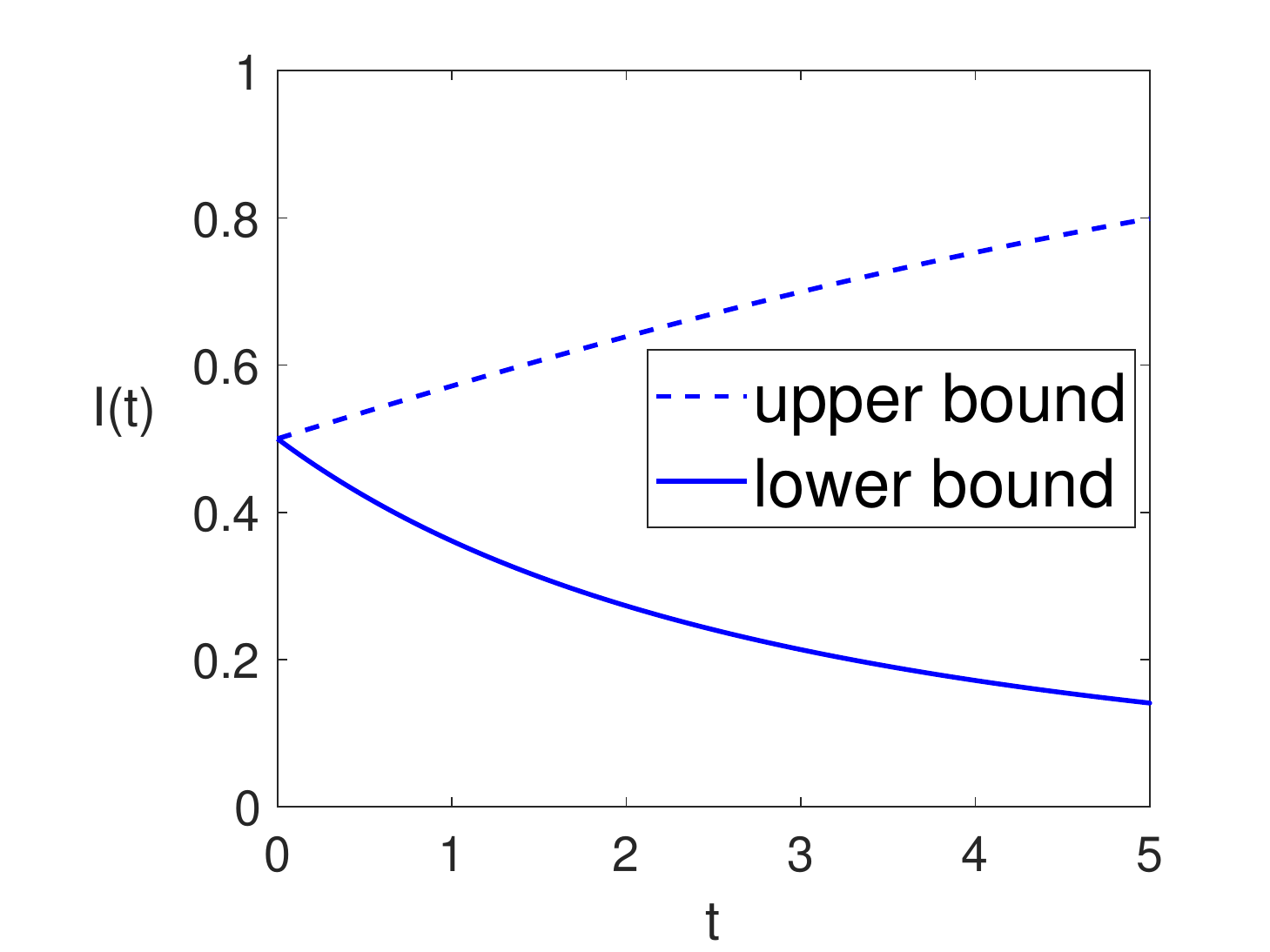}
		\subcaption{$p=0.5$}
	\end{subfigure}	
	\begin{subfigure}[h]{0.325\linewidth}
		\includegraphics[width=1\linewidth]{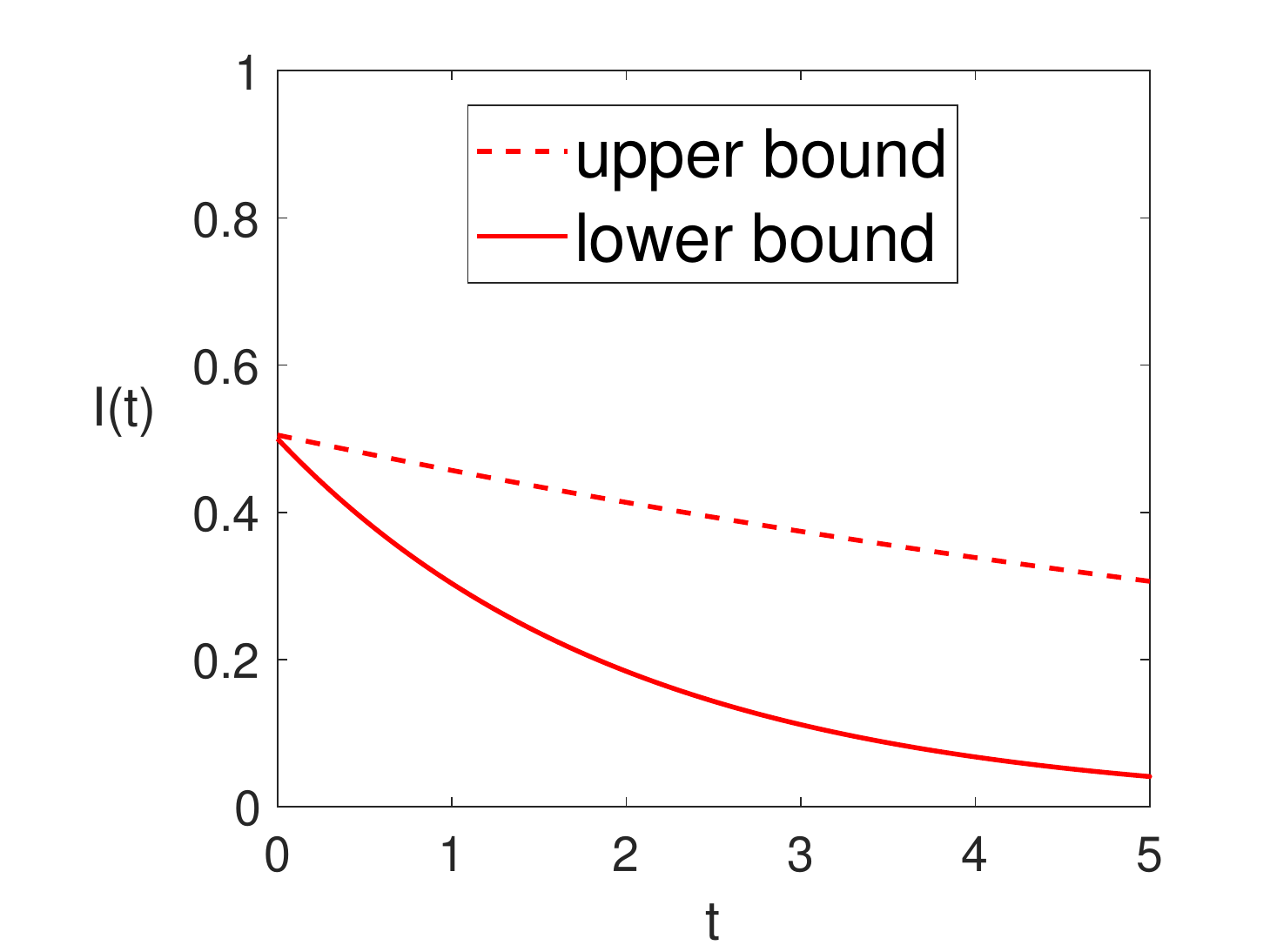}
		\subcaption{$p=0$}
	\end{subfigure}	

	\begin{subfigure}[h]{0.325\linewidth}
		\includegraphics[width=1\linewidth]{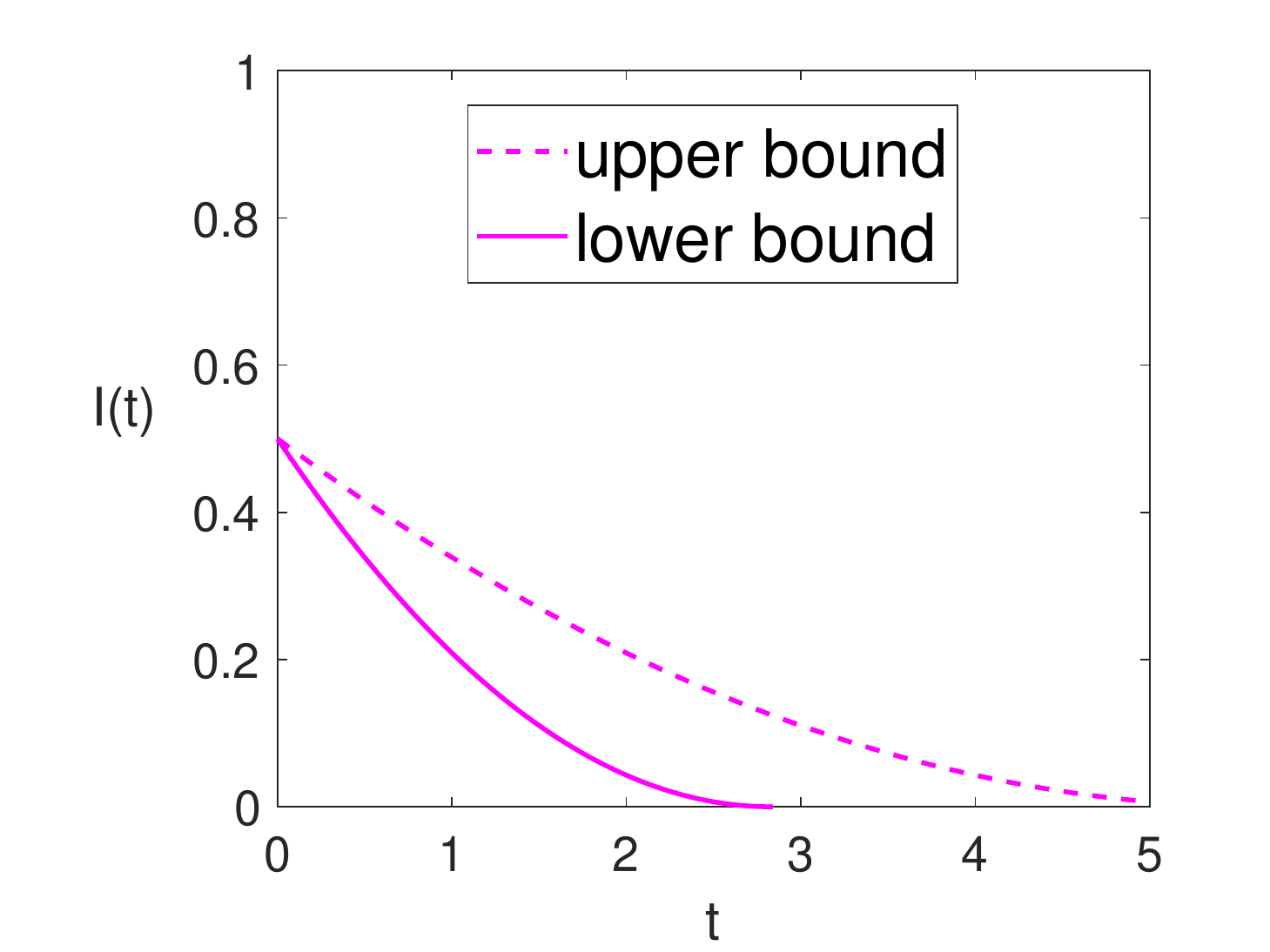}
		\subcaption{$p=-0.5$}
	\end{subfigure}	
	\begin{subfigure}[h]{0.325\linewidth}
		\includegraphics[width=1\linewidth]{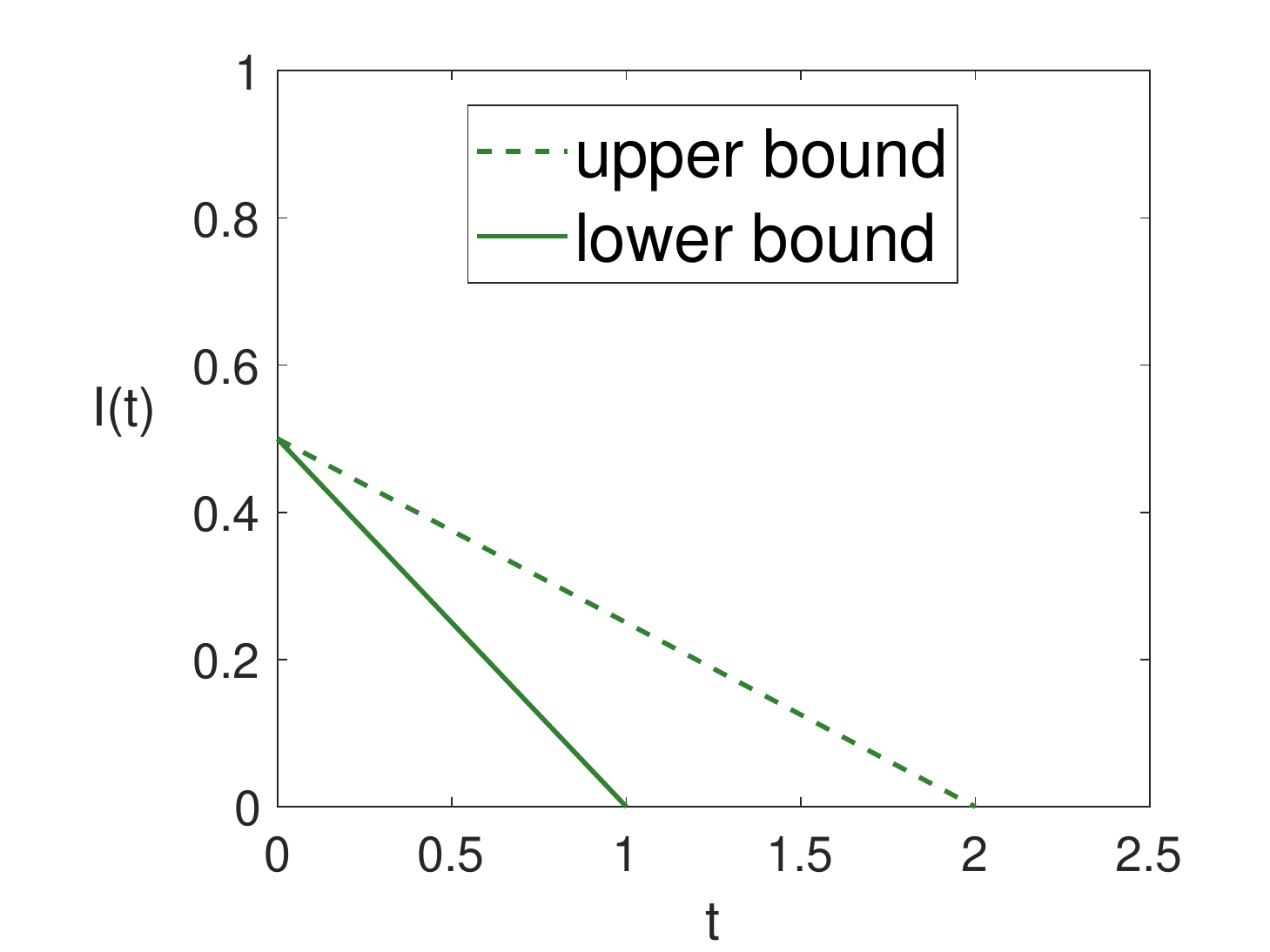}
		\subcaption{$p=-1$}
	\end{subfigure}	
	\begin{subfigure}[h]{0.325\linewidth}
		\includegraphics[width=1\linewidth]{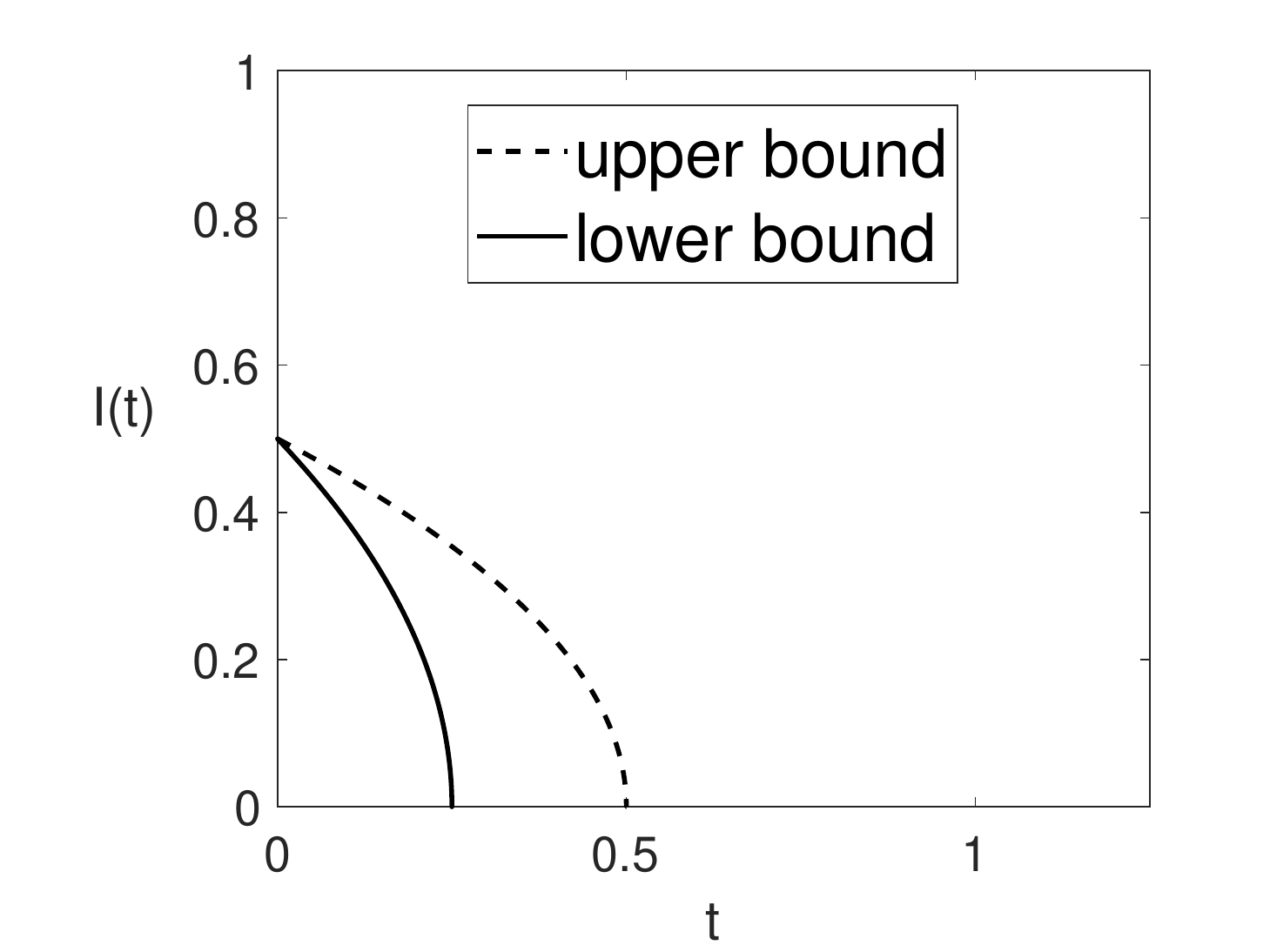}
		\subcaption{$p=-2$}
	\end{subfigure}	
	\caption{Description of upper and Lower bounds for $I(t)$ for different values of $p$, which corresponds to the estimates in Theorem \ref{main theorem1}. In each figure, we use the upper bounds $U(t)$ and lower bounds $L(t)$ defined as follows:\\
		(A) $U(t)=\left(\sqrt{2} -  \left(\sqrt{2} - 1 \right)\left(1-e^{-t/4}\right)\right)^{-2}$ and $L(t)=\left(t/4+\sqrt{2}\right)^{-2}$\\
		(B) $U(t)=\frac{1}{2}\exp(-0.1(t-0.1))$ and $L(t)=\frac{1}{2}\exp(-t/2)$\\
		(C) $U(t)= (\sqrt{2} -t/4)^2$ and
		$L(t)= (\sqrt{2} -t/2)^2$\\
		(D) $U(t)= 1/2-t/4$ and
		$L(t)= 1/2-t/2$\\
		(E)	$U(t)= ((1/2)^{2}-t/2)^{1/2}$ and $L(t)= ((1/2)^{2}-t)^{1/2}$
	}\label{fig estimate p0d5}
\end{figure}

\begin{proof}
$(1)$ From $0<S,I\leq 1$ and $\alpha\leq m_1$ we have
	\begin{align*}
		\begin{split}
			- C_*I^{p+1} \leq I'
			&\leq m_1 I- C_*I^{p+1}. 
		\end{split}
	\end{align*}
	Then, Lemma \ref{lem bernoulli p>0} gives
	\begin{align*}
		\begin{split}
			(I(t_*))^{-p}e^{-pm_1(t-t_*)} +  \frac{C_*}{m_1}\left(1-e^{-pm_1(t-t_*)}\right)\leq (I(t))^{-p}
			\leq pC_*(t-t_*)+(I(t_*))^{-p}.
		\end{split}
	\end{align*}
	Recalling the definition of $C_*=\beta(I(t_*))(I(t_*))^{-p}$ and the fact that $\beta(I(t_*))\leq m_3 < \frac{m_1S(0)}{2}< m_1$,
	we have
	\begin{align*}
		\begin{split}
			&(I(t_*))^{-p}e^{-pm_1(t-t_*)} +  \frac{C_*}{m_1}\left(1-e^{-pm_1(t-t_*)}\right)\cr 
			&= (I(t_*))^{-p} +  C_*\left(\frac{1}{m_1} - \frac{1}{\beta(I(t_*))} \right)\left(1-e^{-pm_1(t-t_*)}\right)
		\end{split}
	\end{align*}
	Then, we have
	\begin{align*}
		\begin{split}
			(I(t_*))^{-p} - \left(I(t_*))^{-p} - \frac{ C_*}{m_1} \right)\left(1-e^{-pm_1(t-t_*)}\right)	\leq (I(t))^{-p}
			\leq pC_*(t-t_*)+(I(t_*))^{-p}.
		\end{split}
	\end{align*}
	Rearranging this, we obtain the desired result.\newline
	
\noindent$(2)$	By Lemma \ref{Lem positivity}, for $p=0$ the following inequality holds 
\begin{align*}
	\begin{split}
		I'&= \alpha(I)SI- C_*I\geq - C_*I
	\end{split}
\end{align*}
due to the positivity of $S$, $\alpha$ and $I$. 
Thus, from Gr\"onwall's inequality, we obtain the desired lower bound.
We turn to the proof of the upper bound. 
By an explicit computation, we see that
$$I''(t_*)= \alpha(I(t_*))I(t_*)S'(t_*)<0.$$
Therefore, $I'(t)<0$ near $t_*$. Assume that there exists $t_{1}>t_*$ such that $I'(t_1)=0$ for the first time. This assumption implies
\begin{align}\label{I7}
	\begin{split}
		I(t_{1})&<I(t_*)
	\end{split}
\end{align}

On the other hand, from the fact that $S^{\prime}(t)=-\alpha(I) SI<0$, we get
\begin{align}\label{ISC}
	\begin{split}
		S(t_{1})<S(t_*).
	\end{split}
\end{align} 
Now, combining \eqref{I7}, \eqref{ISC} and the fact that $\alpha$ is an increasing function of $I$, we find 
$$0=I'(t_1)=(\alpha(I(t_{1}))S(t_{1})-C_*)I(t_1)<(\alpha(I(t_*))S(t_*)- C_*)I(t_*)=I^\prime(t_*)=0.$$
which is contradiction. Therefore, 
\[
I^\prime(t)<0\quad\mbox{ for all }t>t_*, 
\] 
That is, $I$ is strictly decreasing for $t>t_*$. This says that, if we take $t_{2}>t_*$ , then we have 
\begin{align*}
	\begin{split}
		I(t)&<I(t_2),\quad  S(t)<S(t_2)\quad \mbox{ for all }t>t_2
	\end{split}
\end{align*}
This leads to 
\[
\alpha(I(t))S(t)- C_*<\alpha(I(t_{2}))S(t_{2})- C_*.
\]
and
\begin{align*}
	\begin{split}
		I'(t)&=-(\alpha(I(t))S(t)- C_*) I(t)< -(\alpha(I(t_{2}))S(t_{2})- C_*) I(t),\quad t>t_{2}
	\end{split}
\end{align*}\newline

\noindent$(3)$	Since $0<S,I\leq 1$ and $\alpha\leq m_1$ we have from Lemma \ref{Prop small}
\begin{align}\label{01}
	\begin{split}
		- C_*I^{p+1}	\leq I'\leq m_1I- C_*I^{p+1}\leq -\frac{C_*}{2}I^{p+1}.
	\end{split}
\end{align}

This gives the desired estimate.	
\end{proof}

\section{Numerical Tests}
In this section, we provide numerical examples showing that our model \eqref{main model scale} is able to capture the cyclic nature of a trend, and its three different modes: Fad, Fashion and Classic.

\subsection{The influence of parameter $p$}
We show that $p$ determines the speed of decline of a trend, which enables us to characterize Fad, Fashion and Classic. To clarify the role of $p$, we fix other parameters $m_1$, $m_2$, $\ell_a$ and $\ell_b$ in the adoption/rejection rate $\alpha$, $\beta$ by
\begin{align}\label{init1_para}
	(m_1,m_2)&=(20,0.2),\, \ell_a=0,\quad 	(m_3,m_4)=(0.5,2),\, \ell_b=0
\end{align}
and set $\delta=0$. As initial data, we choose
\begin{align}\label{init1}
	S(0)=0.98,\quad I_1(0)=0.02,\quad R(0)=0.
\end{align}
In Figure \ref{fig m3 comparison}, we compare the profile of numerical solutions that correspond to various values of $p$. We observe that the choice $p> 0$ leads to a very slow relaxation tendency of $I$ (Classic). The case of $p=0$ shows a much faster decay rate (Fashion). When $-1<p<0$, $I$ touches zero in a finite time with a soft landing (Fast fashion). In case of $p\leq-1$, $I$ extincts in a finite time with a hard landing (Fad).

\begin{figure}[htbp]
	\centering
	\begin{subfigure}[h]{0.49\linewidth}
		\includegraphics[width=1\linewidth]{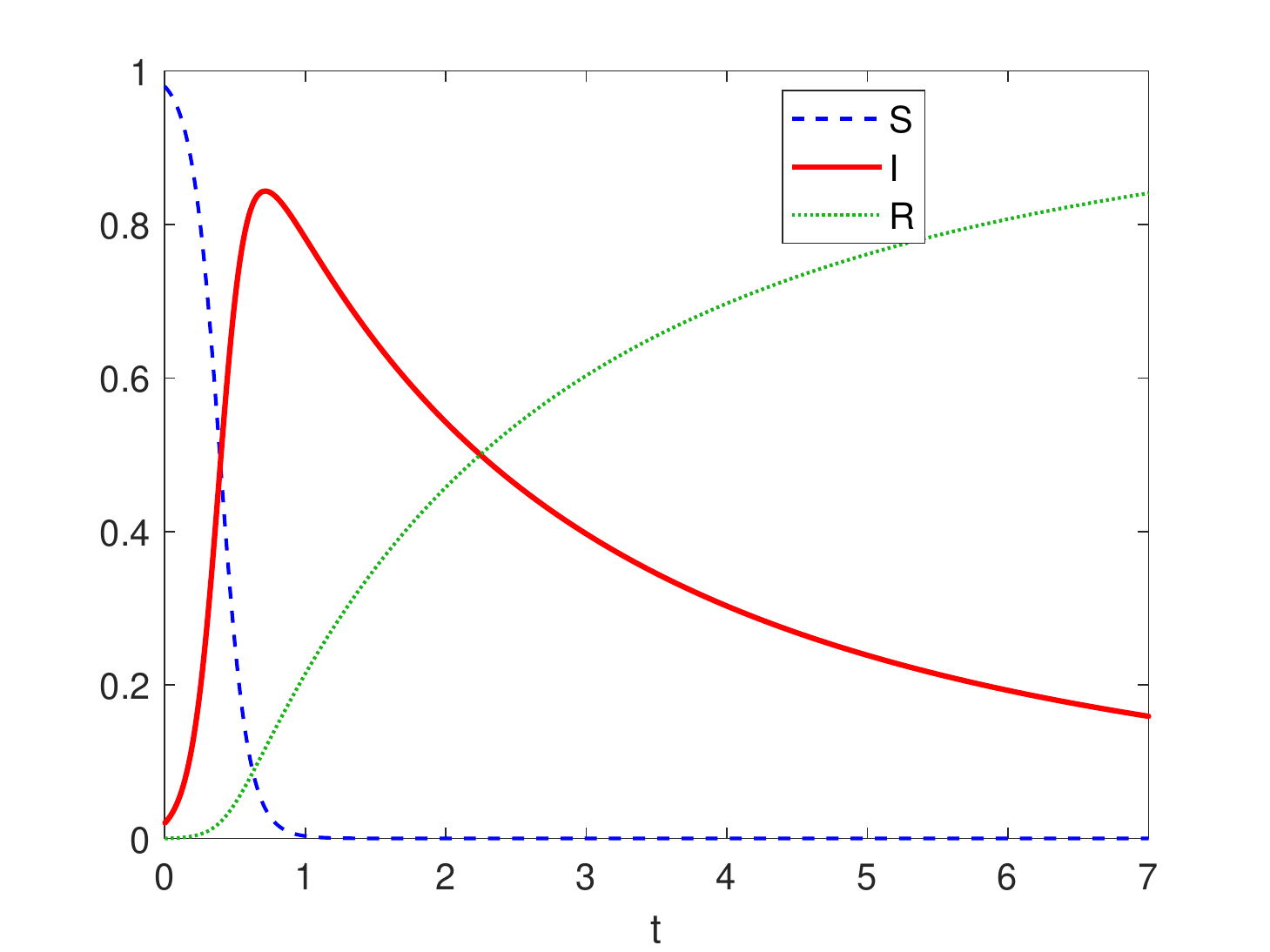}
		\subcaption{$p=0.5$}
	\end{subfigure}	
	\begin{subfigure}[h]{0.49\linewidth}
		\includegraphics[width=1\linewidth]{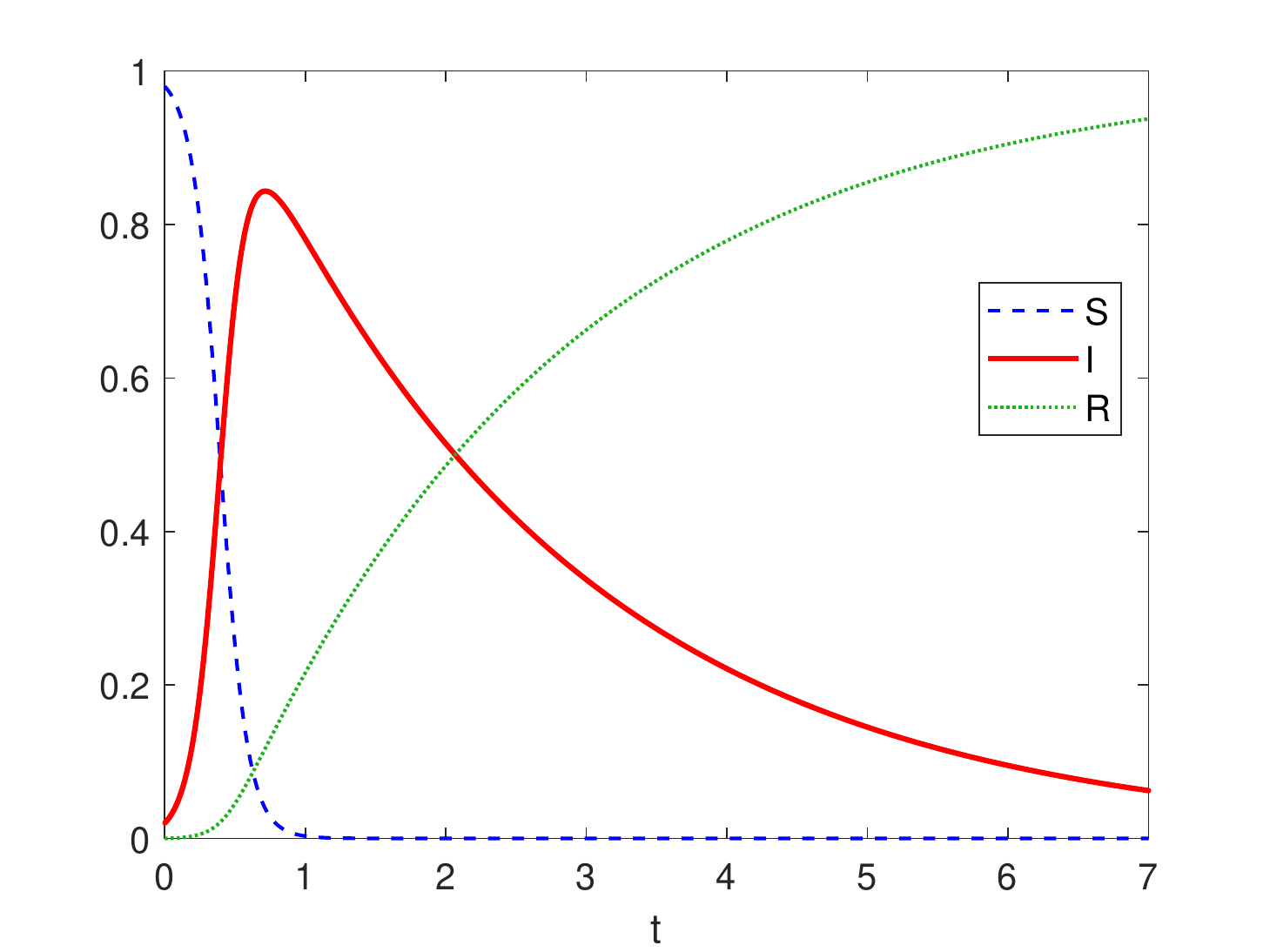}
		\subcaption{$p=0$}
	\end{subfigure}	
	\begin{subfigure}[h]{0.49\linewidth}
		\includegraphics[width=1\linewidth]{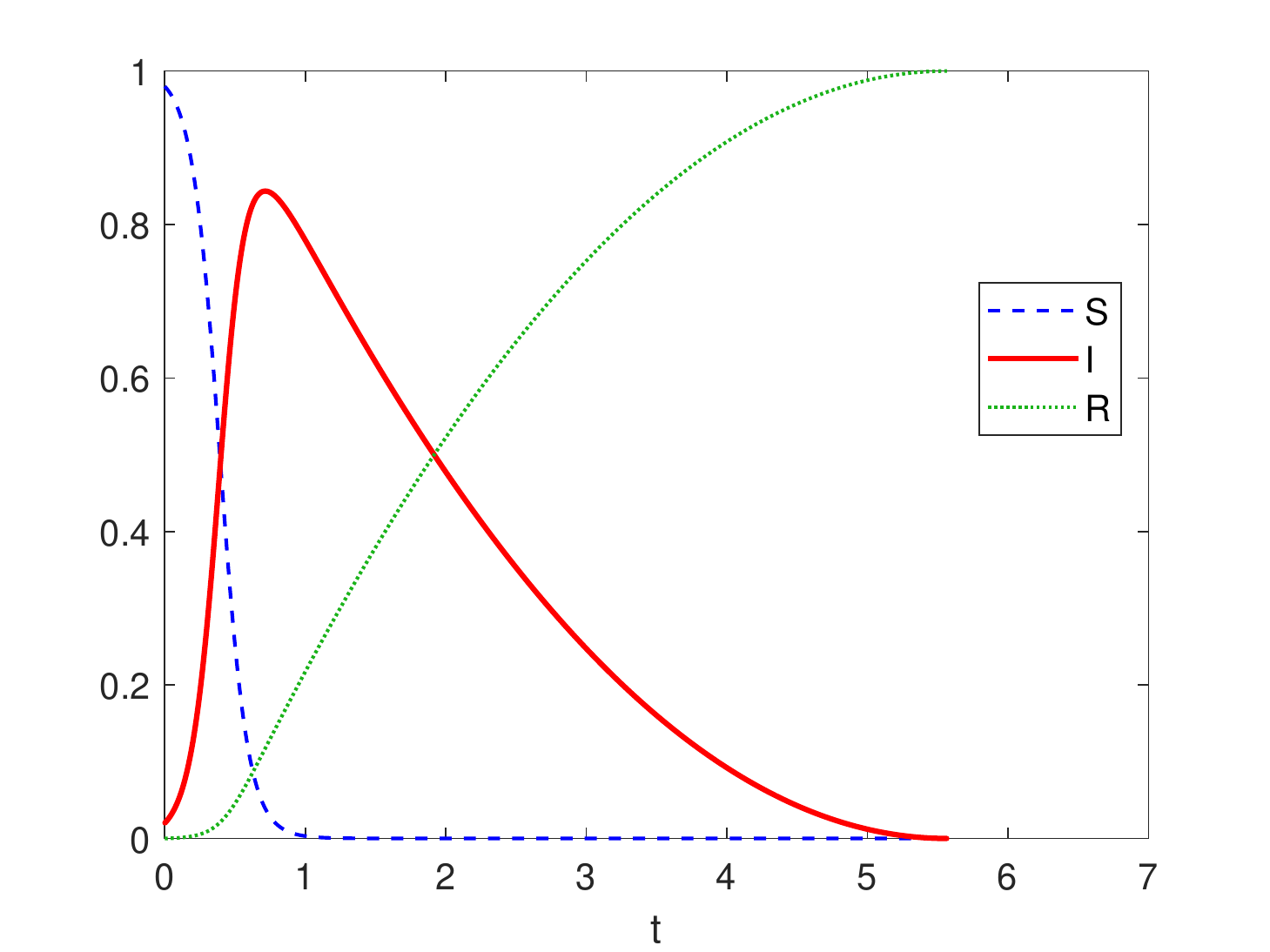}
		\subcaption{$p=-0.5$}
	\end{subfigure}	
	\begin{subfigure}[h]{0.49\linewidth}
		\includegraphics[width=1\linewidth]{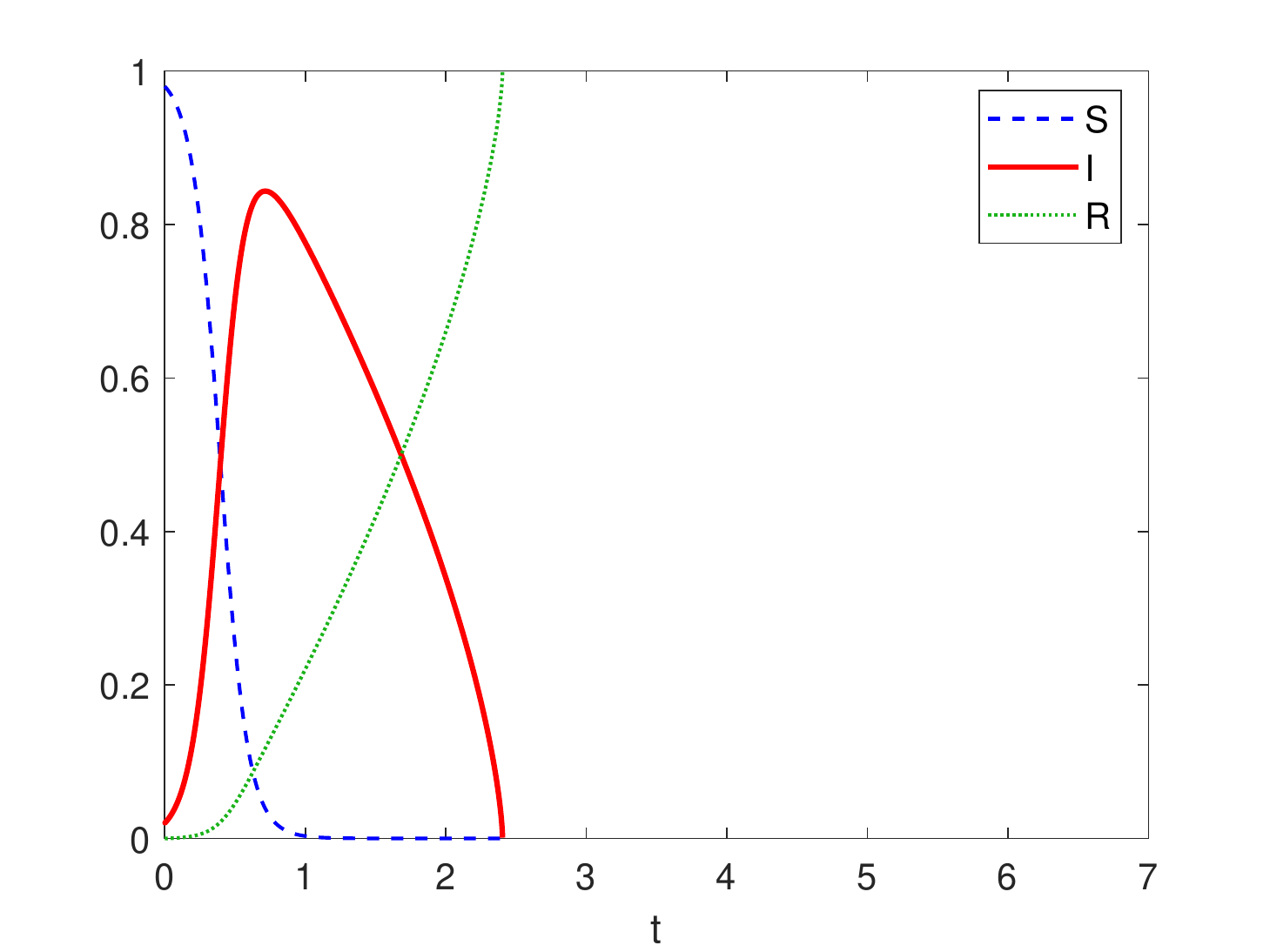}
		\subcaption{$p=-1.5$}
	\end{subfigure}	
	\caption{Time evolution of $S,I,R$.}\label{fig m3 comparison}\label{fig init0}
\end{figure}

\subsection{Expressive power of our model}
In this test, we show that our model has expressive power strong enough to reproduce various trend cycles through an appropriate choice of parameters in the adoption rate $\alpha$ and the rejection rate $\beta$. For this, we reproduce a diagram quoted from \cite{Lin} describing the trend cycle of Fad, Fashion and Classic (see Figure \ref{fig classification11}.). 
\begin{figure}[htbp]
	\includegraphics[width=0.6\linewidth]{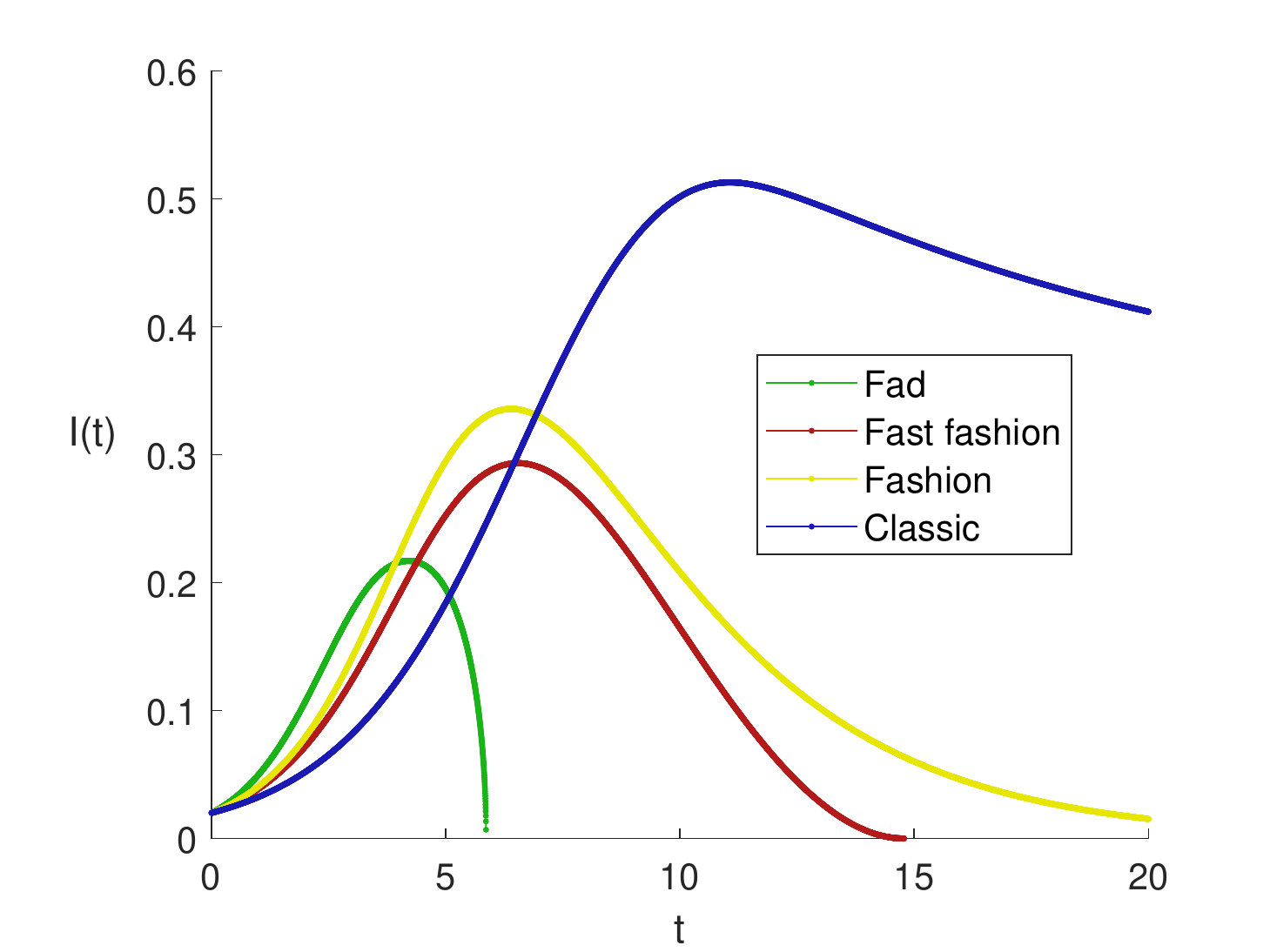}
	\caption{Reproduction of the trend cycles corresponding to Fad, Fashion, and Classic in Figure \ref{fig classification11}. We also add the trend cycle of Fast fashion. See text for parameter values.}\label{fig classification2}
\end{figure}
For each case, we use initial data 
\[
S(0)= 98,  ~I(0)= 2,  ~R(0)=0
\]
with the following set of parameters:
\begin{itemize}
	\item Fad:
	\[
m_1=3,\,    m_2=0.2,\,
m_3=1,\,       m_4=4,\,
\ell_\alpha=0,\,       \ell_\beta=0,\,
p=-2.
	\]
	\item Fast fashion:
	\[
	m_1=2,\,        m_2=0.03,\,
	m_3=0.6,\,        m_4=1.5,\,
	\ell_\alpha=0,\,       \ell_\beta=0,\,
	p=-0.5.
	\]
	\item Fashion:
	\[
	m_1=2,\,       m_2=0.05,\,
	m_3=0.5,\,        m_4=2\,,
	\ell_\alpha=0,\,       \ell_\beta=0,\,
	p=0.
	\]
	\item Classic:
	\[
	m_1=1.2,\,       m_2=0.03,\,
	m_3=0.2,\,        m_4=0.5,\,
	\ell_\alpha=0,\,       \ell_\beta=0,\,
	p=6.
	\]
\end{itemize}

\subsection{Recurring cycle of a trend}
By taking positive value of $\delta$, our model can also describe a recurring cycle of the trend. The case $\delta > 0$ corresponds to the situation where consumers (who have left a trend once) become potential consumers again. In order to clarify the role of $\delta$ in determining the scale and period of such a cycle, throughout this test we fix $$(m_1,m_2,m_3,m_4)=(50,8,4,0.5),\, p=0,\,\ell_\alpha=0.3,\,\ell_\beta=0$$ and take different types of $\delta$. Here, we use the same initial data \eqref{init1}.
In Figure \ref{fig c1}, we observe that the choice of $\delta$ allows us to describe the recurring behavior of numerical solutions.
The simulation shows that a time-dependent function $\delta(t)$ describes much sharper but less fluctuating solutions compared to the case for fixed $\delta$. 
We can take the recurrence rate $\delta$ to be a time-dependent function to describe 
a more complicated periodic trend cycle (Figure \ref{fig c1} (B)). 

\begin{figure}[!htbp]
	\centering
	\begin{subfigure}[h]{0.49\linewidth}
		\includegraphics[width=1\linewidth]{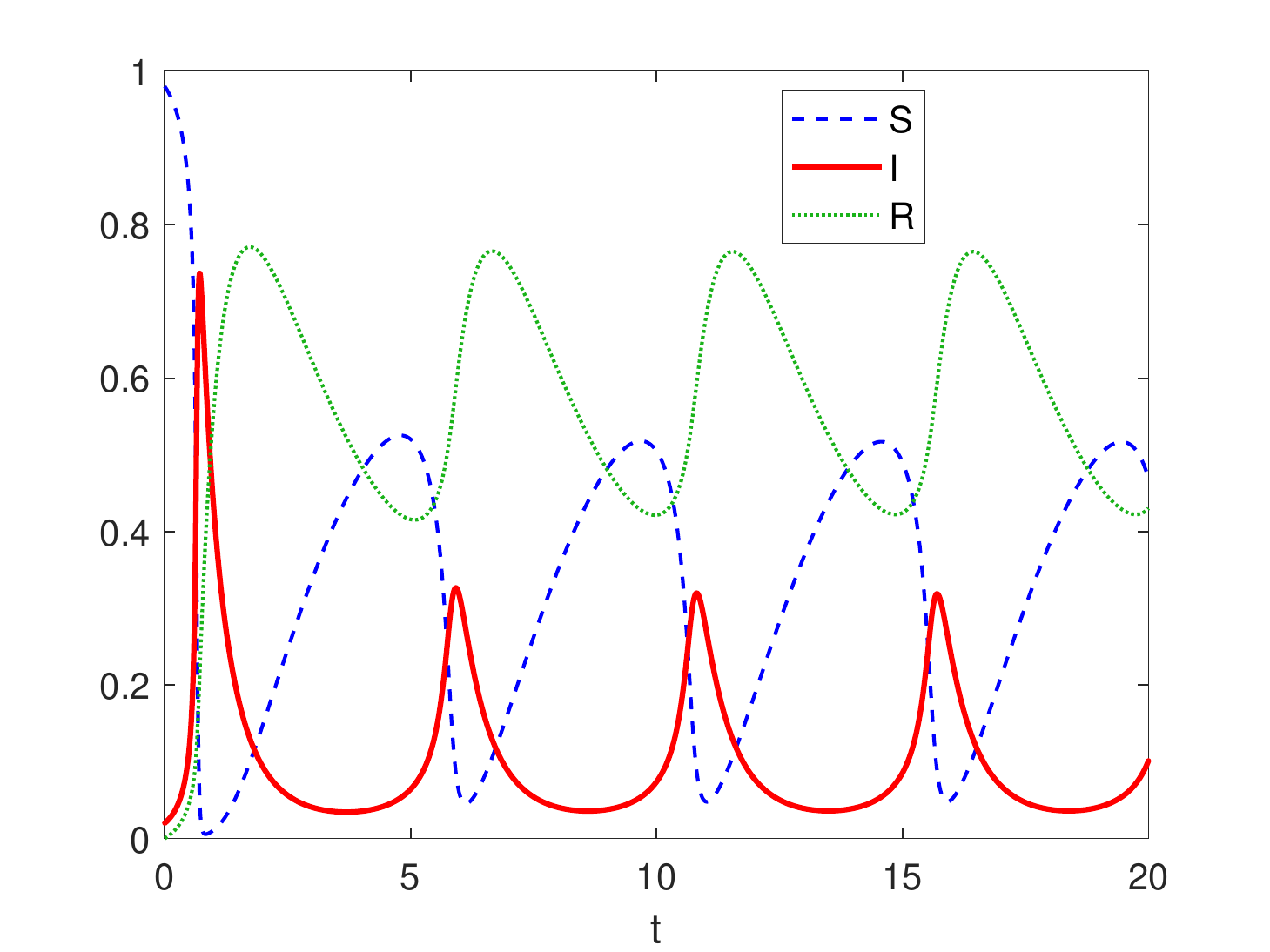}
		\subcaption{$\delta=0.4$}
	\end{subfigure}	
\begin{subfigure}[h]{0.49\linewidth}
	\includegraphics[width=1\linewidth]{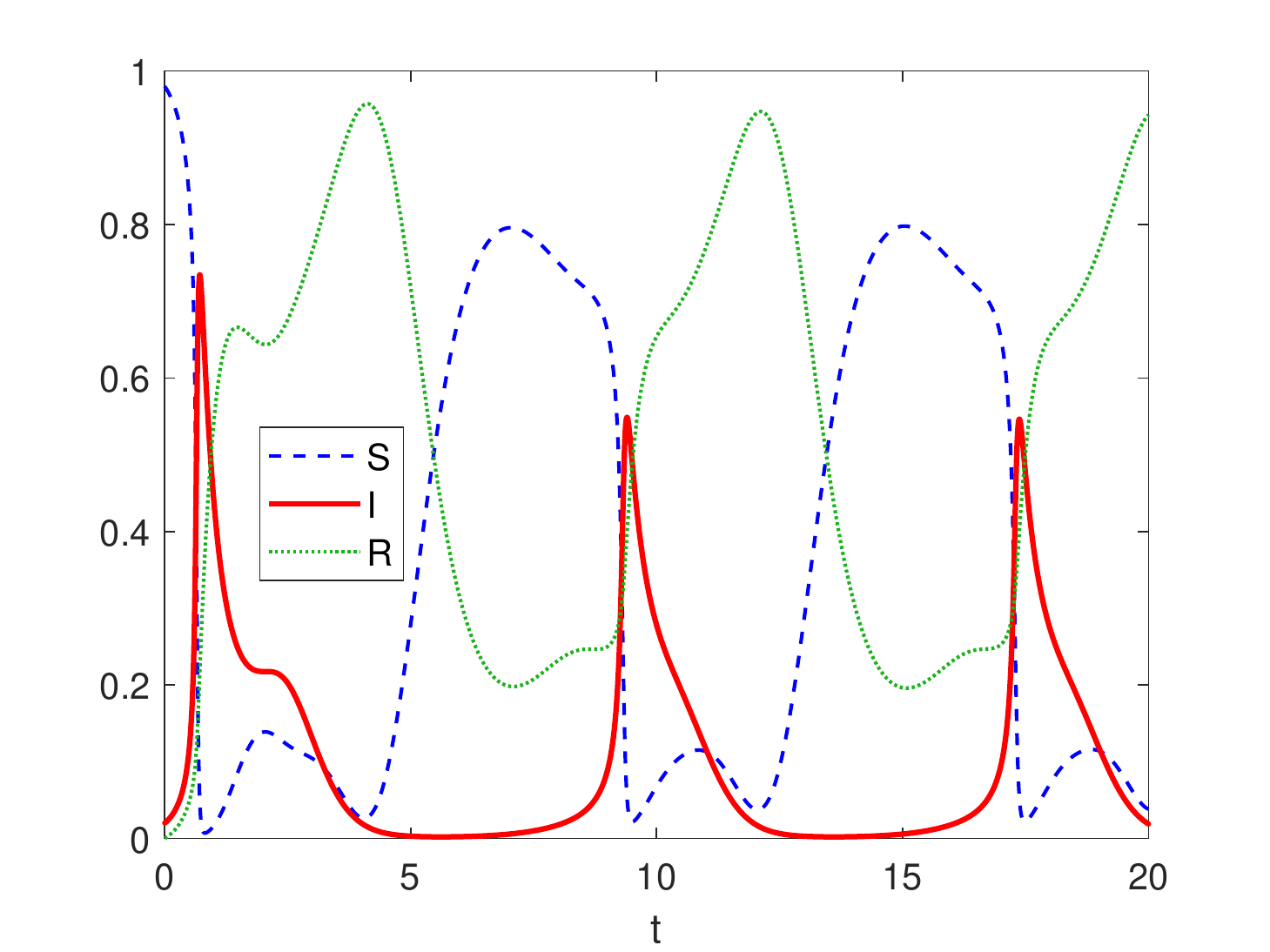}
	\subcaption{$\delta(t)=0.4+0.5\sin(\frac{\pi}{2}t-1)$}
\end{subfigure}	
	\caption{Recurrence of trend cycles depending on $\delta$.}\label{fig c1}
\end{figure}

\section{Conclusion} 
In this work, we suggest a system of differential equations that quantitatively model the evolution of a trend through the consideration of underlying dynamics between trend participants. Our model captures five stages of a trend cycle, namely, the onset, rise, peak, decline, and obsolescence. It also provides a mathematical criterion to divide Fad, Fashion and Classic. We also prove various mathematical properties of our model, and provide simulations to justify the modeling. 
Note that we have investigated a mono-fashion model, where only one trend is considered for simplicity.
Extension of such mono-fashion model into a multi-fashion model, where the differentiation into a sub-fashion is taken into consideration, will
be an interesting topic. This is our upcoming work.\\

\noindent{\bf Acknowledgement:}

Bae is supported by the Basic Research Program through the National Research Foundation of Korea
(NRF) funded by the Ministry of Education and Technology (NRF-2021R1A2C1093383), Cho (RS-2022-00166144), Yoo by Ajou University Research Fund, and Yun by Samsung Science and Technology Foundation under Project Number SSTF-BA1801-02.

\appendix
\section{Proof of Lemma 3.2}
Since $S(t)=S(0)e^{-\int^t_0\alpha I(s)ds}$ and $R(t)=\beta SI$, it is enough to show that $I(t)>0$. 
\noindent $(1)$ Case of $t<t_*$: In this case, we have	
\[
I'(t)\geq -\beta(I)I\geq -m_3I.
\]
Therefore
\[
I(t)\geq\alpha S I-\beta I\geq -\beta I\geq e^{-m_3t}I(0)>0.
\]
This gives the desired result.\newline
\noindent $(2)$ Case of  $t>t_*$ and $p\geq0$:  In this case, we use $I^{p+1}\leq I$ to get
\[
I'(t)=\alpha S I-C_* I^{p+1}\geq -C_*I^{p+1}\geq -C_*I(t)
\]
so that $I(t)\geq I(t_*)e^{-C_*(t-t_*)}>0.$\newline
\noindent $(3)$ Case of  $t_*<t<T_*$ and $p<0$: In this case, $I$ satisfies
\[
I'(t)= \alpha S I-C_* I^{p+1}\geq-C_*I^{p+1}
\] 
which yields:
\[
I(t)\geq \left(I^{-p}(t_*) + pm_3(t-t_*)\right)^{-\frac{1}{p}}>0.
\]

\section{Proof of Lemma 3.3}
Since $\displaystyle m_1S(0)>2m_3$, $I$ increases initially:
\begin{align*}
	\begin{split}
		I'(0)&=\alpha(I(0)) I(0) S(0) - \beta(I(0)) I(0)
		\geq \frac{m_1}{2} I(0) S(0) - m_3 I(0)
		\geq \left(\frac{m_1}{2} S(0) - m_3\right) I(0)
		>0.
	\end{split}
\end{align*}
Assume $I'(t)>0$ for all $t$ so that
\begin{align*}
	\begin{split}
		I(t)&\geq I(0)>0\quad\mbox{ for }t>0
	\end{split}
\end{align*}
However, this implies that
$$S(t)\leq S(0)e^{-\int_0^t \alpha I(0) ds}\leq S(0)e^{-\frac{m_1}{2} I(0)t} \to 0$$
as $t$ goes to $\infty$. That is, there is  a sufficiently large time $T>0$ such that  $$S(t)<\frac{m_3}{4m_1}<\frac{1}{8}$$ for $t>T$. Hence
\begin{align*}
	\begin{split}
		I'&\leq (m_3/4) I - (m_3/2)I  <0.
	\end{split}
\end{align*}
This contradicts the assumption. Therefore, $t_*$ is finite.
\section{Proof of Lemma 3.4}
We first make the following claim:\newline
\textbf{Claim:} Choose any $I_*$ such that $0<I^*<I(t_*)$, then $I(t)$ becomes smaller than $I^*$ in a finite time. \newline
Proof of the claim: To prove the claim, suppose contrarily that $I^*\leq I(t)\leq 1$ for all $t>t_*$. 
Under this assumption, $S$ satisfies
\[
S(t)=S(t_*)e^{-\int_0^t \alpha(I(s))I(s) ds}
\leq S(t_*)e^{-\int_{t_*}^t \frac{m_1}{2} I(s)ds}\leq S(t_*)e^{-\frac{m_1}{2} I_*(t-t_*)}.
\]
Therefore, for sufficiently large $t$, we have
\begin{align}
	\begin{split}
		I'&=\alpha(I) SI- C_* I^{p+1}\leq \big(m_1S-C_*\big)I\leq -  \frac{C_*}{2} I.
	\end{split}
\end{align}
This implies that $I$ decreases exponentially fast, which contradicts the assumption that $I(t)\geq I^*>0$ for all $t>t_*$. This proves the claim.\newline

Now, from the definition of $C_*$ given in the definition of $\beta$, we see that
 $$\left(\frac{C_*}{2m_1}\right)^{-1/p} \leq \left(\frac{m_3}{2m_1} \left(I(t_*)\right)^{-p}\right)^{-1/p}\leq I(t_*)$$
 In the last inequality, we used $\displaystyle m_3/m_1<S(0)/2<1$.
 Therefore, thanks to the claim, we can find $\tau>t_*$ such that
 $$I(\tau)= \left(\frac{C_*}{2m_1}\right)^{-\frac{1}{p}}.$$
Let $\bar t>\tau$ be the first time such that $I(\bar t)=I(\tau)$. Then, 
$I(t)< I(\tau)$ holds for $\tau < t < \bar{t}$. Hence
\begin{align*}
	\begin{split}
		I'(t)\leq  m_1I(t)- C_* I(t)^{p+1}<  m_1I(t)- C_* \{I(\tau)\}^pI(t)= m_1I(t)-\left(2m_1/C_*\right)I(t)\leq -m_1I(t),
	\end{split}
\end{align*}
which implies $I(\bar{t})$ is strictly less than $I(\tau)$ on  $\tau < t < \bar{t}$. This contracts the assumption.
Therefore, for all $t\geq\tau$ we have $I(t)\leq I(\tau)$, which is equivalent to $$I(t)\leq \frac{C_*}{2m_1}\left(I(t)\right)^{p+1}.$$

\section{Proof of Lemma 3.5}
Dividing both sides of the inequality by $I^{p+1}$, we get
\begin{align*}
	\begin{split}
		-b \leq \left(\frac{1}{-p}I^{-p}\right)'&\leq aI^{-p}- b.
	\end{split}
\end{align*}
The second inequality is trivial. For the first one, we apply Gr\"onwall's lemma to obtain
\begin{align*}
	\begin{split}
		I^{-p}(t) &\geq I^{-p}(T_0)e^{-pa(t-T_0)} +  \frac{b}{a}\left(1-e^{-pa(t-T_0)}\right),
	\end{split}
\end{align*}
which completes the proof.

\end{document}